\documentclass{amsart}

\usepackage[utf8]{inputenc}
\usepackage[numbers]{natbib} 
\usepackage{hyperref}
\usepackage{float}
\usepackage{amsmath, amssymb, amsthm}
\usepackage{geometry}
\usepackage{graphicx}
\usepackage{mathrsfs} 
\usepackage{tikz-cd}
\usepackage{mathtools}
\newtheorem{theorem}{Theorem}[section]

\newtheorem{lemma}[theorem]{Lemma}
\newtheorem{corollary}[theorem]{Corollary}
\theoremstyle{definition}
\newtheorem{definition}[theorem]{Definition}
\theoremstyle{remark}
\newtheorem{remark}[theorem]{Remark}
\theoremstyle{plain}
\newtheorem{proposition}[theorem]{Proposition}

\theoremstyle{definition}
\newtheorem{example}{Example}[section]

\DeclareMathOperator{\rH}{\mathrm{H}}
\title{Verdier-Type Category, Geometric Serre's conditions and perversity}
\author{Shang Xu}

\begin{document}

\begin{abstract}
We generalize Serre’s conditions for coherent sheaves to triangulated categories equipped with a \(t\)-structure and duality, which we call Verdier-type categories. We present examples of such categories and their associated Serre conditions, then focus on constructible sheaves, where we refer to these conditions as geometric Serre conditions. We investigate their relationship with perversity and illustrate the theory through a range of examples. Finally, we prove a geometric version of the Auslander–Reiten conjecture. 
\end{abstract}
\maketitle
\tableofcontents
\section{Introduction}
Serre's conditions are fundamental tools in commutative algebra and algebraic geometry. For a Noetherian scheme $X$ and a coherent sheaf $\mathcal F$ on $X$, the condition $S_d$ requires $\operatorname{depth}(\mathcal F_p)\geq \min\{d,\dim\mathcal O_{X,p}\}$ for every $p\in X$. These conditions encode local properties of coherent sheaves and, when applied to the structure sheaf, characterize important geometric features of the underlying scheme. For example, Serre's criterion states that $X$ is normal if and only if it satisfies $S_2$ and $R_1$, the latter being regularity in codimension one. When $X$ is normal, a coherent sheaf satisfies $S_1$ if and only if it is torsion-free, and it satisfies $S_2$ if and only if it is \textit{reflexive}, meaning that the natural map $\mathcal F\to\mathcal F^{\vee\vee}$ is an isomorphism. If $X$ is also integral, the isomorphism classes of rank-one reflexive sheaves correspond naturally to elements of its Weil divisor class group. At the strongest end of this hierarchy, a coherent sheaf satisfying $S_n$ where $n=\dim X$ is called \textit{maximal Cohen--Macaulay}. Such sheaves are closely related to vector bundles: they are locally free wherever $X$ is regular. Indeed, the Auslander--Buchsbaum formula states that $\operatorname{depth}_R M+\operatorname{pd}_R M=\operatorname{depth}R$ for every nonzero finitely generated module $M$ of finite projective dimension over a Noetherian local ring $R$. Consequently, a maximal Cohen--Macaulay module of finite projective dimension is free; in particular, every maximal Cohen--Macaulay module over a regular local ring is free. Thus, the failure of a maximal Cohen--Macaulay sheaf to be locally free reflects singularities of the underlying scheme. Indeed,  \cite{Buchweitz1986} constructed a triangulated singularity category and showed that the maximal Cohen--Macaulay modules form a family of generators.

In Example~\ref{algebraic Serre's conditions}, we use Grothendieck duality to characterize Serre's condition $S_d$ by the inequality
$$
\dim\operatorname{Supp}\mathcal H^i(\mathbb D\mathcal F)\leq -d-i,\qquad i\neq -n,
$$
where $n=\dim X$ and $\mathbb D=R\mathcal{H}om(-,\omega_X)$ is the duality functor associated with the dualizing complex $\omega_X$. This formulation motivates extending Serre's conditions to other sheaf categories equipped with a duality functor. In this paper, we focus on constructible sheaves and call the resulting conditions \textit{geometric Serre's conditions}, which we introduce in \ref{Verdier-type category}. We begin with elementary examples in Section~\ref{first examples}, then investigate their basic properties in Section~\ref{properties}, guided by the classical theory of Serre's conditions for coherent sheaves. In Section~\ref{geometric Serre's conditions and perversity}, we study the relationship between geometric Serre's conditions and perversity. These results connect algebraic and geometric perspectives by interpreting perversity in terms of local depth requirements and Serre's conditions in terms of perversity constraints. More precisely, Proposition~\ref{Generalized duality}, Theorem~\ref{Serre's condition and perversity}, and Theorem~\ref{MCM and intersection complex} show that suitable Serre's conditions imply perversity conditions, while Proposition~\ref{Perverse implies Serre's conditions}, Corollary~\ref{On isolated singularities}, and Corollary~\ref{self dual and finite monodromy} establish converse implications under additional hypotheses. We cite the first two in the introduction since they are the most representative for the connection between our theory and perversity story.
\begin{theorem}[Theorem \ref{Serre's condition and perversity}]
Let $X(r)=\bigcup_{\mathrm{codim_\mathbb C}X_s\leq r}X_s$ to be the union of strata of complex codimension $\leq r$. Let $j_r:U\hookrightarrow X(r)$ be the open inclusion of the union of open strata into $X(r)$. Then if $\mathcal F$ is $S_r$, we have
\begin{enumerate}
    \item [(1)] $\mathcal F[n_\mathbb C]|_{X(r)}$ is perverse on $X(r)$.
    \item [(2)] $\mathcal F[n_\mathbb C]$ is the intermediate extension of $\mathcal F[n_\mathbb C]|_U$ on $X(r-1)$, i.e.,
    $$
    \mathcal F[n_\mathbb C]=(j_{r-1})_{!*}\mathcal F|_U[n_\mathbb C]=\mathrm{IC}(X(r),\mathcal F|_U)
    $$
\end{enumerate}
\end{theorem}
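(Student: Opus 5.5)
The plan is to work throughout with the description of $S_r$ in terms of the Verdier dual $\mathbb D$, as in the Verdier-type setup of \ref{Verdier-type category}. I read it for a constructible sheaf $\mathcal F$ (placed in degree $0$) as a bound on $\dim_\mathbb C\operatorname{Supp}\mathcal H^i(\mathbb D(\mathcal F[n_\mathbb C]))$ of the shape $\le -i-r$ for $i\neq -n_\mathbb C$, with no constraint on the top term $i=-n_\mathbb C$. This reading is the step I am least sure matches the paper's exact normalization. Two standard facts do most of the work: restriction to an open subset commutes with $\mathbb D$, and each $X(r)$ is open in $X$.

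\textbf{Part (1).} Since $\mathcal F$ is a sheaf, $\mathcal F[n_\mathbb C]$ has a single cohomology sheaf, in degree $-n_\mathbb C$, with support of dimension $\le n_\mathbb C$. So the support half of perversity holds automatically. For the cosupport half, take $i>-n_\mathbb C$. Condition $S_r$ puts $\operatorname{Supp}\mathcal H^i(\mathbb D\mathcal F[n_\mathbb C])$ inside a closed union of strata of dimension $\le -i-r<n_\mathbb C-r$. Every stratum of $X(r)$ has dimension $\ge n_\mathbb C-r$, so these supports miss $X(r)$. Hence $\mathbb D(\mathcal F[n_\mathbb C])|_{X(r)}=\mathbb D_{X(r)}(\mathcal F[n_\mathbb C]|_{X(r)})$ is concentrated in degree $-n_\mathbb C$. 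Its support has dimension $\le n_\mathbb C$, so the cosupport condition holds and $\mathcal F[n_\mathbb C]|_{X(r)}$ is perverse. This also records the stronger fact used next: on $X(r)$ both $\mathcal F[n_\mathbb C]$ and its dual are sheaves shifted to degree $-n_\mathbb C$.

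\textbf{Part (2).} I would use Deligne's characterization of the intermediate extension. For each stratum $S\subset X(r)$ of codimension $k\ge1$, with inclusion $i_S$, I need
$$\mathcal H^m(i_S^*\mathcal F[n_\mathbb C])=0 \text{ for } m\ge -\dim S,\qquad \mathcal H^m(i_S^!\mathcal F[n_\mathbb C])=0 \text{ for } m\le -\dim S.$$

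The $i_S^*$ condition is immediate: $i_S^*\mathcal F[n_\mathbb C]$ lives in degree $-n_\mathbb C<-(n_\mathbb C-k)$.

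For $i_S^!$, write $i_S^!=\mathbb D_S\, i_S^*\,\mathbb D$. By Part (1), $i_S^*\mathbb D(\mathcal F[n_\mathbb C])=\mathcal G[n_\mathbb C]$ for a sheaf $\mathcal G$ on $S$, and $\mathcal G$ is locally constant by constructibility with respect to the stratification. Since $S$ is a complex manifold, $\mathbb D_S(\mathcal G[n_\mathbb C])=\mathcal G^\vee[2\dim S-n_\mathbb C]$, which lives in degree $n_\mathbb C-2\dim S$. The required inequality $n_\mathbb C-2\dim S>-\dim S$ reduces to $k=n_\mathbb C-\dim S>0$, which holds.

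Then Deligne's criterion gives $\mathcal F[n_\mathbb C]|_{X(r)}=(j_r)_{!*}(\mathcal F|_U[n_\mathbb C])=\mathrm{IC}(X(r),\mathcal F|_U)$. The version over $X(r-1)$ is the same argument restricted to that open subset. Alternatively, one can apply the criterion stratum by stratum, extending step by step from $U$ through $X(1),\dots,X(r)$.

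\textbf{Main obstacle.} I expect the difficulty to lie in the bookkeeping of $S_r$: real versus complex dimensions, and exactly which degree the exceptional index $i\neq -n$ refers to after the shift by $n_\mathbb C$. I would first fix these against the coherent model in Example~\ref{algebraic Serre's conditions}. I would also check that $\mathcal G$ is genuinely a local system on each stratum, so that the computation $\mathbb D_S(\mathcal G)=\mathcal G^\vee[2\dim S]$ is valid.
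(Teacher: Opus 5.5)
\textbf{There is a genuine gap: the normalization of $S_r$ in complex dimensions is wrong.} You flagged this step as uncertain, and the whole argument rests on it. The condition $S_r$ bounds \emph{real} dimensions: $\dim_\mathbb R\operatorname{Supp}\mathcal H^{j}(\mathbb D\mathcal F)\le -j-r$ for $j\neq -2n_\mathbb C$. Since $\mathcal H^i(\mathbb D(\mathcal F[n_\mathbb C]))=\mathcal H^{i-n_\mathbb C}(\mathbb D\mathcal F)$, the correct complex form (the one the paper writes down) is
\[
\dim_\mathbb C\operatorname{Supp}\mathcal H^i(\mathbb D(\mathcal F[n_\mathbb C]))\le \tfrac{1}{2}(n_\mathbb C-i-r),\qquad i\neq -n_\mathbb C .
\]
This is strictly weaker than your bound $\le -i-r$. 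Taking $i=-n_\mathbb C+1$, it only forces these supports to avoid strata of complex codimension $\le r/2$, not all of $X(r)$.

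So your ``stronger fact'' is false: it is not true that $\mathbb D(\mathcal F[n_\mathbb C])|_{X(r)}$ sits in degree $-n_\mathbb C$ (i.e.\ that $\mathcal F$ is maximal Cohen--Macaulay over $X(r)$). With it goes the input $i_S^*\mathbb D(\mathcal F[n_\mathbb C])=\mathcal G[n_\mathbb C]$ of your Part (2). A counterexample is the node $X=\{xy=0\}\subset\mathbb C^2$ with strata $\{0\}$, $U$, and $\mathcal F=k_X$. The link of $0$ is two circles, so $i_0^!k_X$ has cohomology $k$ in degree $1$ and $k^2$ in degree $2$. Hence $k_X$ is $S_1$ by Proposition~\ref{Generalized duality}. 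But $X(1)=X$ and $\mathcal H^0(\mathbb D(k_X[1]))_0\cong\mathcal H^1(i_0^!k_X)^\vee\neq0$.

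Your Part (2) would also give $k_X[1]=\mathrm{IC}(X(1),k_U)$. This fails, because $\mathrm{IC}(X,k_U)=j^\circ_*k_U[1]$ has stalk $k^2$ at $0$. So the intermediate-extension statement over $X(r)$ is false, and only the version over $X(r-1)$ holds. The $\mathrm{IC}(X(r),\dots)$ in the displayed formula is a slip for $X(r-1)$, which is all the paper's proof establishes.

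\textbf{What is missing is the factor $2$ between real and complex codimension.} The paper works on the $i^!$ side. By Proposition~\ref{Generalized duality}, a stratum $S$ of complex codimension $c$ has $i_S^!\mathcal F\in D^{\ge\min\{r,2c\}}$. For $c\le r$ one has $\min\{r,2c\}\ge c$, which gives perversity via Lemma~\ref{perverse lemma}. For $1\le c\le r-1$ one has $\min\{r,2c\}\ge c+1$, which gives the intermediate extension via Lemma~\ref{intermediate extension lemma}. For $c=r$ one only gets $r$, not $r+1$.

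Your $\mathbb D$-side route also works once the bound is corrected. Suppose a stratum of dimension $d_S\ge n_\mathbb C-r$ lies in $\operatorname{Supp}\mathcal H^i(\mathbb D(\mathcal F[n_\mathbb C]))$ with $i>-n_\mathbb C$. Then $2d_S\le n_\mathbb C-r-i\le d_S-i$, so $d_S\le -i$, which is the cosupport condition. If moreover $d_S\ge n_\mathbb C-r+1$, the same estimate gives $d_S\le -i-1$. Combined with $d_S\le n_\mathbb C-1$ for non-open strata when $i=-n_\mathbb C$, this is exactly the strict condition $i_S^*\mathbb D(\mathcal F[n_\mathbb C])\in D^{\le -\dim S-1}$ needed over $X(r-1)$.
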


\begin{theorem}[Theorem \ref{MCM and intersection complex}]
Suppose $\mathcal F$ is a constructible sheaf and $\mathcal F|_U=\mathcal L$ is a local system on $U$. Then $\mathcal F$ is maximal Cohen--Macaulay if and only if $\mathcal F=j^\circ_*\mathcal L$ and
$$j^\circ_*\mathcal L[n_\mathbb C]=\mathrm{IC}(X,\mathcal L),\quad j^\circ_*\mathcal L^\vee[n_\mathbb C]=\mathrm{IC}(X,\mathcal L^\vee)
$$
\end{theorem}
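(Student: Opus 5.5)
We need to prove: $\mathcal F$ is maximal Cohen--Macaulay iff $\mathcal F=j^\circ_*\mathcal L$ and both $j^\circ_*\mathcal L[n_\mathbb C]$ and $j^\circ_*\mathcal L^\vee[n_\mathbb C]$ are intersection complexes. We work with the geometric analogue of the duality characterization of Example~\ref{algebraic Serre's conditions}. With $\mathbb D$ Verdier duality, $S_d$ should read $\dim_{\mathbb C}\operatorname{Supp}\mathcal H^i(\mathbb D\mathcal F)\le -d-i$ for $i\neq -n$ (with the paper's normalization $n=2n_\mathbb C$ or $n_\mathbb C$). Maximal Cohen--Macaulay means $S_{n}$, so all cohomology sheaves of $\mathbb D\mathcal F$ other than the lowest one vanish. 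In other words, $\mathbb D\mathcal F$ is concentrated in a single degree and is itself a shifted constructible sheaf $\mathcal G$. On $U$ we have $\mathcal G|_U=\mathcal L^\vee$ up to orientation twist, since $\mathbb D\mathcal L=\mathcal L^\vee[2n_\mathbb C]$ there.

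\emph{($\Rightarrow$).} Since MCM implies $S_r$ for every $r$, Theorem~\ref{Serre's condition and perversity} applied with $r=n_\mathbb C$ (so $X(r)=X$) gives $\mathcal F[n_\mathbb C]=\mathrm{IC}(X,\mathcal L)$.

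It then remains to identify $\mathcal F$ with $j^\circ_*\mathcal L$, where $j^\circ$ is the inclusion of the open stratum. The IC complex satisfies the support condition $\mathcal H^{-n_\mathbb C}$-part $=j^\circ_*\mathcal L$; this is Deligne's formula truncated in the lowest degree. Because $\mathcal F$ is a sheaf, $\mathcal F=\mathcal H^0(\mathcal F)=\mathcal H^{-n_\mathbb C}(\mathrm{IC}(X,\mathcal L))=j^\circ_*\mathcal L$. This identification can also be checked directly: the adjunction $\mathcal F\to j^\circ_*j^{\circ*}\mathcal F$ is an isomorphism, because its kernel and cokernel are detected by $H^0,H^1$ of $i^!\mathcal F$ on the singular strata, and those vanish by the costalk condition of IC.

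For the dual statement, $\mathbb D\mathcal F[-n_\mathbb C]$ is also a sheaf $\mathcal G$ by the concentration observed above. Then $\mathcal G[n_\mathbb C]=\mathbb D(\mathcal F[n_\mathbb C])=\mathbb D\,\mathrm{IC}(X,\mathcal L)=\mathrm{IC}(X,\mathcal L^\vee)$. The same argument then gives $\mathcal G=j^\circ_*\mathcal L^\vee$, and hence $j^\circ_*\mathcal L^\vee[n_\mathbb C]=\mathrm{IC}(X,\mathcal L^\vee)$.

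\emph{($\Leftarrow$).} Take $\mathcal F=j^\circ_*\mathcal L$ with $\mathcal F[n_\mathbb C]=\mathrm{IC}(X,\mathcal L)$. Then
$$\mathbb D\mathcal F=\mathbb D(\mathrm{IC}(X,\mathcal L))[n_\mathbb C]=\mathrm{IC}(X,\mathcal L^\vee)[n_\mathbb C]=j^\circ_*\mathcal L^\vee[2n_\mathbb C],$$
using the hypothesis on $\mathcal L^\vee$. Thus $\mathbb D\mathcal F$ has a single nonzero cohomology sheaf, in degree $-2n_\mathbb C$ (the excluded index $-n$). Every inequality defining $S_{n}$ is therefore vacuous, and $\mathcal F$ is MCM.

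The main obstacle is bookkeeping. The paper's exact normalization of $S_d$ (real versus complex dimension, and which index is excluded) must be matched so that MCM is exactly "$\mathbb D\mathcal F$ concentrated in one degree." We also have to justify that $\mathcal H^{-n_\mathbb C}\mathrm{IC}(X,\mathcal L)=j^\circ_*\mathcal L$ in the forward direction. For the latter we use Deligne's construction: the first step is $\tau_{\le -n_\mathbb C}Rj_*$, giving exactly $j_*$ in the lowest degree. Subsequent steps only add higher degrees, since the codimension of every singular stratum is at least $1$.
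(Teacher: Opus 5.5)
Your proof is correct in outline, but two indices need fixing. First, Theorem~\ref{Serre's condition and perversity} with $r=n_\mathbb C$ gives the intermediate-extension property only over $X(n_\mathbb C-1)$, not over all of $X$. At a point stratum, $S_{n_\mathbb C}$ yields only $i^!\mathcal F\in D^{\ge n_\mathbb C}$, whereas the IC costalk condition needs $D^{\ge n_\mathbb C+1}$ (compare Corollary~\ref{On isolated singularities}). Since MCM is $S_{2n_\mathbb C}$, apply the theorem with $r=n_\mathbb C+1$ instead, i.e.\ use Corollary~\ref{middle Serre's condition and intersection complex}. Second, the sheaf is $\mathcal G=\mathbb D\mathcal F[-2n_\mathbb C]$, not $\mathbb D\mathcal F[-n_\mathbb C]$; your next equation $\mathcal G[n_\mathbb C]=\mathbb D(\mathcal F[n_\mathbb C])$ is right for this $\mathcal G$. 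Also commit to the paper's real-dimension normalization, $n=n_\mathbb R$. Only with that normalization is MCM equivalent to $\mathbb D\mathcal F$ being concentrated in degree $-n_\mathbb R$, which both directions of your argument use.

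With these fixes, your converse is the paper's argument verbatim. Your forward direction differs in two local steps.
\begin{itemize}
\item To get $\mathcal F=j^\circ_*\mathcal L$, the paper cites Proposition~\ref{reflexive} (MCM $\Rightarrow S_2\Rightarrow$ pushforward from $U$). You instead read off $\mathcal F=\mathcal H^{-n_\mathbb C}\mathrm{IC}(X,\mathcal L)=j^\circ_*\mathcal L$ from Deligne's construction.
\item For the statement about $\mathcal L^\vee$, the paper notes that $\mathcal G$ is itself MCM, because $\mathbb D\mathcal G=\mathcal F[n_\mathbb R]$, and reruns the argument on $\mathcal G$. You dualize the IC complex directly via $\mathbb D\,\mathrm{IC}(X,\mathcal L)=\mathrm{IC}(X,\mathcal L^\vee)$, which the paper uses only in the converse.
\end{itemize}
The paper's route stays inside its own framework and makes visible that MCM is stable under duality. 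Yours replaces the appeal to Proposition~\ref{reflexive} with a standard property of IC complexes. That proposition is stated under the hypothesis that $k_X$ is $S_2$, which is not assumed here, although the implication actually used does not need it. Your alternative ``direct check'' via $H^0,H^1$ of $i^!\mathcal F$ is exactly the paper's (1)$\Rightarrow$(2) argument in Proposition~\ref{reflexive}.
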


Finally, in Section~\ref{Auslander--Reiten Conjecture}, we formulate a geometric analogue of the Auslander--Reiten conjecture and give a complete prove in our case:

\begin{theorem}[Theorem \ref{ARC}]
Suppose $X$ is a equidimensional complex variety with Whitney stratification. Fix a field $k$ of characteristic $0$ and a constructible sheaf $\mathcal F$ on $X$ over $k$, if $\mathcal F$ is geometrically maximal Cohen--Macaulay, and $\mathcal Ext^{>0}(\mathcal F,\mathcal F)=0$, then $\mathcal F$ is a local system. 
\end{theorem}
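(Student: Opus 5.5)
We argue by induction on the strata, working locally near a point $x$ of a stratum $S$ of complex codimension $c>0$. By Theorem~\ref{MCM and intersection complex} we may write $\mathcal F=j^\circ_*\mathcal L$ with $j^\circ_*\mathcal L[n_\mathbb C]=\mathrm{IC}(X,\mathcal L)$ and $j^\circ_*\mathcal L^\vee[n_\mathbb C]=\mathrm{IC}(X,\mathcal L^\vee)$. The task is then to show that the vanishing $\mathcal Ext^{>0}(\mathcal F,\mathcal F)=0$ forces $\mathcal F$ to be locally constant across every singular stratum. Suppose inductively that $\mathcal F$ is already a local system on the union $V$ of strata of codimension $<c$.

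Near $x$, choose a normal slice $N$ to $S$ and its link $L$, a real $(2c-1)$-dimensional compact stratified space. Since $\mathcal F$ is a local system on $V$, its restriction to $L\cap V$ is a local system. The induction hypothesis lets us assume the link, or at least the part of it that matters, is smooth. The IC/MCM conditions give
\[
\mathcal H^i(\mathcal F)_x=\rH^i(L,\mathcal L)=0\quad\text{for } i>0 .
\]
This is the depth condition coming from $S_n$, and the same vanishing holds for $\mathcal L^\vee$. Moreover, $\mathcal F_x=\rH^0(L,\mathcal L)=\mathcal L^{\pi_1(L)}$ is the monodromy invariants.

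Next we compute the stalk of $\mathcal Ext^{i}(\mathcal F,\mathcal F)$ at $x$. Because $\mathcal F=j^\circ_*\mathcal L$, adjunction gives
\[
R\mathcal Hom(\mathcal F,\mathcal F)\simeq Rj^\circ_*R\mathcal Hom(j^{\circ*}\mathcal F,\mathcal L).
\]
Its stalk at $x$ is therefore $\rH^i(L,\mathcal End(\mathcal L))$. The hypothesis thus says that $\mathcal End(\mathcal L)=\mathcal L\otimes\mathcal L^\vee$ has no higher cohomology on the link.

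The key step is to combine this with Poincar\'e--Verdier duality on the link, which has real dimension $2c-1\geq 1$:
\[
\rH^{2c-1}(L,\mathcal End(\mathcal L))\cong \rH^0(L,\mathcal End(\mathcal L)^\vee)^\vee=\rH^0(L,\mathcal End(\mathcal L))^\vee .
\]
The last equality holds because $\mathcal End(\mathcal L)$ is self-dual via the trace pairing. Since $\mathrm{char}\,k=0$, the identity endomorphism splits off: $\mathcal End(\mathcal L)=k\oplus\mathcal End_0(\mathcal L)$. Hence $\rH^0(L,\mathcal End(\mathcal L))\neq 0$ whenever $\mathcal L\neq 0$, and so $\rH^{2c-1}(L,\mathcal End\mathcal L)\neq 0$. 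This contradicts $\mathcal Ext^{2c-1}(\mathcal F,\mathcal F)_x=0$. The conclusion is that no genuine singular stratum can occur in the support of $\mathcal L$: either $x$ is not in the closure of the support, or the link is not an honest odd-dimensional homology manifold.

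That last alternative is the main obstacle. When the link is singular, Poincar\'e duality must be replaced by Verdier duality for $\mathrm{IC}$-type complexes on $L$. The MCM hypothesis that both $\mathcal L$ and $\mathcal L^\vee$ give $\mathrm{IC}$ sheaves is exactly what makes $\mathcal End(\mathcal L)$ extended by $j^\circ_*$ behave self-dually on $L$. We would use it to justify the duality isomorphism above. This may require shrinking to a neighborhood where the strata in the link have already been shown, by induction, to carry $\mathcal F$ as a local system.

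Once the contradiction rules out nonzero monodromy obstruction at each stratum, we conclude that $\mathcal F|_{V\cup S}$ is locally constant near $S$. Equivalently, $\mathcal L$ extends across $S$ with $\mathcal F_x\cong\mathcal L_y$ for nearby $y$. Inducting over increasing codimension shows that $\mathcal F$ is a local system on all of $X$.
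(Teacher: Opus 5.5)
There is a genuine gap, and it is in your first computation. The sheaf $\mathcal F=j^\circ_*\mathcal L$ is the \emph{non-derived} pushforward $\tau_{\le 0}Rj_*\mathcal L$, not $Rj_*\mathcal L$. Work on the normal slice at $x$, where (as in your inductive setup) $\mathcal F$ is a local system $\mathcal L$ off $x$. There the MCM condition (Proposition~\ref{Generalized duality}) says only that $i_x^!\mathcal F$ is concentrated in degree $2c$. Since $\rH^{i+1}(i_x^!\mathcal F)\cong\rH^i(L,\mathcal L)$ for $i\ge1$, this kills $\rH^i(L,\mathcal L)$ only for $1\le i\le 2c-2$. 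The top group $\rH^{2c-1}(L,\mathcal L)$ is unconstrained and usually nonzero: $\rH^1(S^1,k)=k$ for $k_{\mathbb C}$ with strata $\{0\}\cup\mathbb C^\times$, and $\rH^3(S^3/G,\mathcal L_V)=V^G$ in Example~\ref{Kleinian surfaces}.

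Hence your adjunction $R\mathcal Hom(\mathcal F,\mathcal F)\simeq Rj_*\mathcal End(\mathcal L)$ is false; adjunction applies only to $Rj_*$. Applying $R\operatorname{Hom}(-,\mathcal F)$ to $j_!\mathcal L\to\mathcal F\to i_{x*}\mathcal F_x\xrightarrow{+1}$ gives instead
\[
\operatorname{Hom}_k\bigl(\mathcal F_x,\rH^{2c-1}(L,\mathcal L)\bigr)[-2c]\longrightarrow R\mathcal Hom(\mathcal F,\mathcal F)_x\longrightarrow R\Gamma(L,\mathcal End\mathcal L)\xrightarrow{+1}.
\]
So $\mathcal Ext^{2c-1}(\mathcal F,\mathcal F)_x=\ker\delta$, where $\delta\colon\rH^{2c-1}(L,\mathcal End\mathcal L)\to\operatorname{Hom}_k(\mathcal F_x,\rH^{2c-1}(L,\mathcal L))$ is $\psi\mapsto(s\mapsto\psi\cdot s)$, and the class you detect need not lie in this kernel.

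Your argument therefore proves too much. It never uses the monodromy, so it would give $\mathcal Ext^{2c-1}(\mathcal F,\mathcal F)_x\neq0$ for every nonzero $\mathcal L$. That includes $k_X$ on $X=\mathbb C$ stratified by $\{0\}\cup\mathbb C^\times$, where actually $R\mathcal Hom(k_X,k_X)=k_X$ and $\delta\colon k\to k$ is an isomorphism. A contradiction of that kind would mean the hypotheses are never satisfied; it cannot show that $\mathcal F$ is locally constant. The singular-link issue you flag is also left unresolved.

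The missing idea is what the correct triangle encodes. Vanishing of $\mathcal Ext^{2c-1}$ and $\mathcal Ext^{2c}$ at $x$ means $\delta$ is an isomorphism. On a manifold link, Poincar\'e duality identifies $\delta$ as the dual of the evaluation map $\rH^0(L,\mathcal L)\otimes\rH^0(L,\mathcal L^\vee)\to\rH^0(L,\mathcal End\mathcal L)$, $s\otimes\phi\mapsto\phi(-)s$. Thus $\mathrm{id}$ is in its image, and $\mathcal L|_L$ is a direct summand of $k_L^N$, hence trivial when $L$ is connected.

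The paper runs this mechanism globally and so avoids links and induction altogether:
\begin{itemize}
\item From $R\mathcal Hom(\mathcal F,\mathcal F)=\mathbb D(\mathcal F\otimes\mathbb D\mathcal F)$ and $\mathbb D\mathcal F=\mathcal G[n]$ it gets $\mathbb D(\mathcal F\otimes\mathcal G)=\mathcal End(\mathcal F)[n]$. So both are MCM, hence of the form $j^\circ_*(\cdot)$ by Proposition~\ref{reflexive}.
\item The trace splitting (characteristic $0$) makes $k_X$ a summand of $\mathcal End(\mathcal F)$. So $X$ is geometrically Gorenstein (Proposition~\ref{Cohen--Macaulay and Gorenstein}) and $\mathcal G=\mathcal F^\vee$.
\item Then evaluation $\mathcal F\otimes\mathcal F^\vee\to\mathcal End(\mathcal F)$ is an isomorphism, and lifting $\mathrm{id}_{\mathcal F}$ locally exhibits $\mathcal F$ as a summand of $k_V^N$.
\end{itemize}

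One caveat applies to both the paper's argument and this local version: they need $\mathcal L$ nonzero on every component of $U$. The paper uses this when it calls $A\to\mathcal End(\mathcal F)$ generically injective. For the reducible $X=\{xy=0\}$, the sheaf $\mathcal F=k_{\{y=0\}}$ (extended by zero) has $\mathbb D\mathcal F=\mathcal F[2]$ and $R\mathcal Hom(\mathcal F,\mathcal F)=\mathcal F$, yet it is not a local system. So irreducibility of $X$ should be assumed.
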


We then apply this perspective to the classical conjecture, obtaining a geometric reduction of the Gorenstein case to the case of one-dimensional Gorenstein rings. We also present an alternative formulation of the problem that may suggest connections with other areas of mathematics.

The theory of Serre's conditions extends beyond constructible sheaves. The formulation above requires a triangulated category equipped with a $t$-structure defining cohomology objects $H^i$, a duality functor $\mathbb D$, and a dimension filtration on the heart providing a notion of support dimension. Under suitable cohomological vanishing assumptions on $\mathbb D$, many properties of classical Serre's conditions extend to this setting. We formalize these requirements in Definition~\ref{Verdier-type category} and call categories equipped with this structure \textit{Verdier-type categories}. In Section~\ref{Other Verdier-type categories}, we present examples arising from mixed Hodge modules, Nori motives, irregular constructible complexes, and arithmetic $\mathcal D$-modules, as well as the homotopy category of bounded spectra with finitely generated homotopy groups. These examples illustrate the scope of our framework across geometry and homotopy theory. An important source of examples consists of $k$-linear categories equipped with a six-functor formalism and a realization functor to constructible complexes that commutes with the six operations, as in the cases of mixed Hodge modules and Nori motives. These realization functors connect the broader framework to constructible sheaves, highlighting the role of geometric Serre's conditions and motivating our focus on them in this paper. The Serre's conditions in those Verdier categories could be interesting and deserve exploring in the future.

\textbf{Notations.} Throughout this paper, all local systems and constructible sheaves are assumed to be of finite type (i.e., the fibers are finite-dimensional). $k$ usually denotes a field, and whenever there is a finite group actions, we suppose that $\mathrm{char} \, k$ and the group order are coprime. The phrase complex varieties in this paper does not generally refer only to irreducible algebraic varieties over $\mathbb C$, but also to complex algebraic sets which are unions of varieties. 

\textbf{Acknowledgment.} I want to thank my advisor David Eisenbud for inspiring discussions. Thanks also Peisheng Yu, Jerry Yang and Wenqing Wei for useful conversations.

\textbf{AI disclosure.} The ChatGPT 6-Astra is used  to aid in computations of example \ref{intersection complex but not MCM} and \ref{Cn/G}, for literature searches, and for proofreading. The paper was written by the author, who takes full responsibility for its correctness.

\section{Serre's conditions in Verdier-type category}
In this section, we formalize Serre's condition in a category with $t$-structure and duality. We begin by defining Verdier-type category and introducing the main interest of this paper: the geometric Serre's condition. We will also provide other interesting examples of Verdier-type categories in a separate subsection.

\subsection{Verdier-type category and Geometric Serre's conditions}\label{Verdier-type category}
Suppose we are given a triangulated category $\mathcal C$. A \textit{Verdier-type category structure} on $\mathcal C$ consists of the following data:
\begin{enumerate}
\item[$(V_1)$] A t-structure $(\mathcal C^{\geq 0}, \mathcal C^{\leq 0})$, denote by $\mathcal C^\heartsuit$ its abelian heart. 
\item[$(V_2)$] A filtration of Serre subcategories of $\mathcal C^\heartsuit$ 
$$ 0 = \mathcal C^\heartsuit_{-\infty} \subset \mathcal C_0^\heartsuit \subset\mathcal C^\heartsuit_1\subset ...\subset \mathcal C^\heartsuit_n = \mathcal C^\heartsuit
$$
\item[$(V_3)$] A dualizing functor $\mathbb D$:
$$\mathbb D:\mathcal C^{op}\rightarrow\mathcal C
, \quad \iota: \mathbb D\circ\mathbb D\xrightarrow{\sim} id_\mathcal C$$
Satisfying that $\mathbb D\mathcal C^\heartsuit\in \mathcal C^{\leq 0}\cap\mathcal C^{\geq -n}$. 
\end{enumerate}

Denote by ${}^t\rH^i$ the cohomology with respect to the $t$-structure. With these settings, we can formulate the definition of Serre's condition $S_d$ in $\mathcal C$.

\begin{definition}\label{Serre's condition}
Suppose $A$ is an object in $C^\heartsuit$. For an integer $0\leq d\leq n$, we say $A$ satisfies \textit{Serre's condition $S_d$} if for $i\neq -n$, we have
$$\quad {}^t\rH^i(\mathbb DA)\in \mathcal C^\heartsuit_{-i-d} 
$$
where $\mathcal C^\heartsuit_r=\mathcal C^\heartsuit_{-\infty}=0$ if $r<0$ and $\mathcal  C^\heartsuit_r=\mathcal C^\heartsuit$ if $r>n$.
\end{definition}
As an extension of classical language, if a object satisfying $S_n$, we call it a \textit{maximal Cohen--Macaulay} (MCM) object.
\begin{example}\label{algebraic Serre's conditions}
For an $n$-dimensional Cohen--Macaulay variety $X$ with dualizing sheaf $\omega_X$: Take $\mathcal C = D^b(\mathfrak{Coh}(X))$, t-structure $(D^b_+(\mathfrak{Coh}(X)), D^b_-(\mathfrak{Coh}(X)))$; Take dimension filtration $\mathcal C_i,\ i=\{-\infty\}\cup\mathbb Z_{[0,n]}$ to be the full subcategory of $\mathfrak{Coh}(X)$ consisting of sheaves with dimension of support $\leq i$; and finally take dualizing functor $\mathbb D=R\mathcal{H}om(-,\omega_X[n])$.. Then this definition \ref{Serre's condition} actually recovers the classical Serre's condition on coherent sheaves, as shown in the following.

Suppose $\mathcal F$ is a coherent sheaf on $X$. The Serre's condition $S_d$ requires for each point $p\in X$, one has
$$\mathrm{depth}(\mathcal F_p)\geq \min\{d,\dim\mathcal O_{X,p}\}
$$
Using local cohomology, this is the same as requiring $\rH^i_p(\mathcal F_p)=0$ for $i<\min\{d,\dim\mathcal O_{X,p}\}$. By Grothendieck local duality, this is equivalent to 
$$\mathrm{Ext}_{\mathcal{O}_{X,p}}^i(\mathcal F_p,\omega_{X,p})=0 \ \ \text{for}\ i> \dim\mathcal O_{X,p}-\min\{d,\dim\mathcal O_{X,p}\}
$$
Since sheaf Ext localizes, this is also
$$\mathcal{E}xt_{\mathcal{O}_{X}}^i(\mathcal F, \omega)_p=0 \ \ \text{if} \ \begin{cases} d\geq\dim\mathcal O_{X,p}, \  i>0\\
d <\dim \mathcal{O}_{X,p}, \ i>\dim\mathcal O_{X,p} - d
\end{cases}
$$
Now $\dim\mathcal{O}_{X,p}=n-\dim\overline{\{p\}}$, so the two conditions above are $\dim\overline{\{p\}}\geq n-d$ and $n-d-i<\dim\overline{\{p\}}<n-d$, i.e., the $S_d$ condition translates into
$$\mathcal{E}xt_{\mathcal{O}_{X}}^i(\mathcal F, \omega)_p=0 \ \ \text{if} \ \dim\overline{\{p\}}>n-d-i\  \text{and} \ i>0
$$
Shifting by the dimension of $X$, we see that this is also equivalent to
$$\dim\mathrm{Supp} \mathcal H^i(\mathbb D\mathcal{F})\leq -d-i,\ i\neq -n
$$
Notice that $i\neq -n$ is the same as $i>-n$ here, since when $i<-n$, $\rH^i(\mathbb D\mathcal F)$ is automatically $0$.
\end{example}

\begin{example}
Suppose $k$ is field. For a reasonably stratified locally compact Hausdorff space $X$ (for example, a complex variety with a Whitney stratification) of finite cohomology dimension, we have Verdier dualizing functor $\mathbb D=R\mathcal{H}om(-,a_X^! k)$ on the derived category of constructible sheaves $\mathcal C=D_c^b(X)$. Again we take the filtration as support-of-dimension filtration, the $t$-structure as the usual (not the perverse) $t$-structure. When $X$ is a locally closed subset of $\mathbb R^N$, the definition of $S_d$ condition is simply
$$\dim_\mathbb R \mathrm{Supp} \mathcal H^i(\mathbb D\mathcal F)\leq -i-d,\quad i\neq -\dim_\mathbb R X
$$
In our convention, if we fixed a good stratification $\mathcal S$, the constructible sheaf we consider will be those constructible with respect to $\mathcal S$.
\end{example}

We call this \textit{geometric Serre's conditions}. This is what we will focus in the following chapters. We will see how these conditions similar to algebraic Serre's conditions an the relation between those conditions and perversity. Many interesting examples will be given.

When $k$ is any commutative noetherian ring with finite global dimension, it also make sense to define Serre's condition $S_d$ as
$$ \mathrm{mdSupp} \mathcal H^i(\mathbb D\mathcal F)\leq -i-d,\quad i\neq -\dim_\mathbb RX
$$
Here $\mathrm{mdSupp}$ is the modified dimension of support introduced in \cite{Achar2021PerverseSheaves}. Unfortunately, under this setting, the cohomology bound does not satisfy $(V_3)$, so further considerations are needed here.

\subsection{Other Verdier-type categories}\label{Other Verdier-type categories}

We will see more interesting examples of Verdier-type categories in this subsection. Those reveal the generality of our theory and explain why it stimulate our interest.
\begin{example}
Fix a subfield $k$ of $\mathbb R$, suppose $X$ is a complex variety and let $\mathcal C=D^b\mathrm{MHM}(X,k)$, the bounded derived category of mixed Hodge modules. Take the $t$-structure to be the constructible $t$-structure:
$${}^{ct}\mathcal C^{\leq 0}=\{M:\mathrm{rat}M\in D_c^{\leq 0}(X)\},\quad {}^{ct}\mathcal C^{\geq 0}=\{M:\mathrm{rat}M\in D_c^{\geq 0}(X)\}
$$
Now take $\mathbb D=R\mathcal{H}om(-,a_X^! k(0))$ where $a_X: X\rightarrow \mathrm{Spec}\ \mathbb C$. By \cite{Tubach2025MixedHodgeStacks} Proposition 4.2.3., $\mathbb D$ is a dualizing functor. The commutativity of rat with the six functors implies that these settings provide a Verdier-type category structure on $\mathcal C$. The $S_d$ condition on a complex $\mathcal M$ in $\mathcal C$ is interpreted as
$$\dim_\mathbb R\mathrm{Supp}\ \mathrm{rat}{}^{ct}\mathcal H^i(\mathbb D\mathcal M)\leq -i-d.
$$
Which is the same as $\mathrm{rat} \mathcal M\in D^b_c(X)$ satisfies the geometric Serre's condition $S_d$.
\end{example}

More generally, as explained in \cite{Tubach2025NoriHodge}, suppose $k$ is a field and $\mathcal C:\mathrm{Var}^{op}\rightarrow \mathrm{Triang}$ is a $k$-linear triangulated category of coefficients with a  six-functor formalism and a
conservative realisation functor $R : \mathcal C \rightarrow D^b_c$ compatible with the operations. Define the constructible $t$-structure as
$${}^{ct}\mathcal C^{\leq 0}=\{M\in\mathcal C|\ x^*M\in D^{\leq 0}(\mathrm{Spec} \,\kappa(X))\},\quad {}^{ct}\mathcal C^{\geq 0}=\{M\in\mathcal C|\ x^*M\in D^{\geq 0}(\mathrm{Spec} \,\kappa(X))\}
$$
Then as long as the Verdier functor $\mathbb D$ is an equivalence of categories, we have a Verdier-type category structure. For example, for $\mathcal C=\mathcal M(X)$, the category of perverse motives. By \cite{IvorraMorel2024FourOperations} Proposition 2.6, the corresponding Verdier functor $\mathbb D_X^\mathcal M$ is an equivalence of categories. Another example is the derived category of irregular constructible complexes $D^b_{ic}(\Lambda_X)$ introduced in \cite{Kuwagaki2021IrregularPerverse}, in which the properties of the Verdier dualizing functor follows from Corollary 5.38 and Lemma 9.4.
\begin{example}
Suppose $\mathfrak{P}$ is a smooth formal scheme with special fiber of dimension \(d\). \cite{Lazda2023Rigidification} introduced a triangulated category $\mathcal C=D_{hol,F}^b(\mathfrak{P})$, the the full subcategory on objects in the bounded derived category of overholonomic $\mathcal D^\dag_{\mathfrak{P}\mathbb Q}$ modules which are of Frobenius type, where $\mathcal D^\dag_{\mathfrak{P}\mathbb Q}$ is the Berthelot’s ring of overconvergent differential operators of $\mathfrak{P}$. The constructible \(t\)-structure is defined in $\S$ 4.4.2 of \cite{Lazda2023Rigidification}, and the dualizing functor is shown in \cite{Caro2009Overholonomic} (specifically, equation (1.10.6) and equation (1.10.7)). The cohomology range of dualizing functor can be seen by combining the formulas in \cite{Lazda2023Rigidification} $\S$4.4.2 with self-duality of the holonomic \(t\)-structure in $\S$ 4.4.1.
\end{example}

Let's see one more example, where we can explicitly write down the meaning of each Serre's condition.
\begin{example}
Take \(\mathcal C=\operatorname{Ho}(\mathrm{Sp}^b_{\mathrm{fg}})\), the bounded spectra with finitely generated homotopy groups. \cite{HebestreitLandNikolaus2021LSpectra} Proposition 3.2.7 states the Postnikov \(t\)-structure and identifies its heart with abelian groups. it refers to \cite{Lurie2017HigherAlgebra}, Proposition 1.4.3.6. Restricting to bounded spectra with finitely generated homotopy groups gives the heart $\mathrm{Ab}_{\mathrm{fg}}$. More concretely, our setting is 
\[
\begin{aligned}
\mathcal C^{\le0}
&=\{E:\pi_j(E)=0\text{ for }j<0\},\\
\mathcal C^{\ge0}
&=\{E:\pi_j(E)=0\text{ for }j>0\}.
\end{aligned}
\]
and its heart is 
$$
\mathcal C^\heartsuit=\{E:\pi_j(E)=0\text{ for }j\ne0\}=\{HA|A\in\mathrm{Ab_{fg}}\}\cong\mathrm{Ab_{fg}}
$$
where $HA$ is the Eilenberg–MacLane spectrum with respect to the finite generated abelian group $A$.

In \cite{HebestreitLandNikolaus2021LSpectra}, Definition 2.2 defined Anderson duality \(I\) as follows: For an injective abelian group \(J\), let \(I_J(\mathbb S)\) be the spectrum representing \([E,I_J]\cong\operatorname{Hom}_{\mathbb Z}(\pi_0E,J)\) and let $I_J(X)=\mathrm{map}(X,I_J(\mathbb S))$ for any spectra $X$. The Anderson dual of spectrum $X$ is the one that fits into the fiber sequence $I(X)\rightarrow
I_{\mathbb Q}(X)\rightarrow I_{\mathbb Q/\mathbb Z}(X)$.
One has \(I(X)\simeq F(X,I(\mathbb S))\), and  Theorem 2.9 shows that $I\circ I =id$. Now we can define our dualizing functor as
$$\mathbb D(X)=I(X)[1]=\mathrm{map}(X,I(\mathbb S)[1])
$$

Now \cite{HebestreitLandNikolaus2021LSpectra} Example 2.5 tells us that $\mathbb DH\mathbb Z=H\mathbb Z[1]$, $\mathbb DH\mathbb Z/n=H\mathbb Z/n$ for positive integer $n$, hence $\mathbb D HA\in \mathcal C^{\leq 0}\cap \mathcal C^{\geq -1}$ for any finitely generated abelian group $A$. Therefore, we just need a one step dimension filtration: let $\mathrm{Ab}_{\mathrm{fin}}$ to be the finite abelian groups and take
$$
0\subset\mathrm{Ab}_{\mathrm{fin}}\subset\mathrm{Ab}_{\mathrm{fg}}
$$ 
to be the dimension filtration $0=\mathcal C_{-\infty}^\heartsuit\subset\mathcal C_0^\heartsuit\subset\mathcal C_1^\heartsuit=\mathrm{Ab}_{\mathrm{fg}}$. Now for a finitely generated abelian group $A=\mathbb Z^m\oplus T$ where $T$ is torsion, we see that 
$${}^t\rH^i(\mathbb D HA)= \begin{cases}
    HT, \quad i=0\\
    H\mathbb Z^m \quad i=-1\\
    0 \quad\quad \text{otherwise}
\end{cases}
$$
Now $S_0$ is requiring $HT\in\mathrm{Ab}_{\mathrm{fin}}$, which automatically holds; $S_1$ is same as requiring $HT=0$, so $T=0$, i.e., $A$ is torsion free. So we conclude
Thus
$$
HA\text{ satisfies }S_1
\iff A\text{ is torsion-free}
\iff A\simeq\mathbb Z^m,
$$
This is the maximal Cohen--Macaulay condition in this category.
\end{example}

\section{Geometric Serre's conditions and Perversity}
The various example from last section with a realization functor illustrates the importance of Verdier-type category structure on constructible sheaves, so it deserves a first exploitation.

In this section, we suppose $X$ is a reasonably stratified real space, i.e., $X$ is an algebraic/analytic space/CW complex with a stratification such that shriek restriction/extension preserves constructible strata and there is Verdier duality. Complex varieties with Whitney stratification are the most common settings, and the reader are encouraged to assume that. 

We also suppose $X$ is $n$-\textit{equidimensional}, i.e., there is a dense open set $U$ in $X$ which is a $n$-dimensional manifold. One may also suppose this open set is a union of open strata. We will follow the notations in \cite{Achar2021PerverseSheaves}, denote by $f_*$, $f_!$, $f^!$ the derived functors, and use $f_*^\circ$, $f_!^\circ$ to denote the nonderived functor if there is difference.

\subsection{First examples}\label{first examples}
Let's recall our definition of Serre's condition $S_d$ for constructible sheaf $\mathcal F$ on $X$:
$$\dim_\mathbb R\mathrm{Supp}\mathcal H^i(\mathbb D\mathcal F)\leq -i-d,\quad i\neq -n
$$
For $d\leq 0$, this automatically holds for any constructible sheaf. We are hence only interested in $S_d$ conditions with $d>0$, especially the maximal Cohen--Macaulay (MCM) condition $S_n$, which can be interpreted as
$$\mathbb D\mathcal F =\mathcal G[n] \ \text{for some constructible sheaf} \ \mathcal G
$$
Some examples help us understand this condition.

\begin{example}
Suppose $X$ is a smooth variety and take itself as a stratification. The constructible heart consists of local systems on $X$. The dualizing functor is the non-derived dual shifted by $\dim_\mathbb R X$, so for any local system $\mathcal{L}$, we have $\mathbb D\mathcal L=\mathcal L^\vee[\dim_\mathbb R X]$. Hence any local system is MCM in our case
\end{example}
\begin{example}
Let $X=[0,1)\subset\mathbb R$, take the stratification $X=\{0\}\cup(0,1)$, and let $k$ be a field. A sheaf $\mathcal F$ constructible with respect to this stratification is characterized by two finite-dimensional vector spaces $V_0$ and $V_{(0,1)}$ and a map $\mathrm{res}:V_0\rightarrow V_{(0,1)}$. Let $j:(0,1)\hookrightarrow [0,1)$; the dualizing complex of $X$ is $j_!k_{(0,1)}[1]$. For $0<\varepsilon<1$, we have 
$$
R\Gamma_c([0,\varepsilon),\mathcal F)\simeq[V_0\xrightarrow{\mathrm{res}}V_{(0,1)}]
$$
with the two terms in degrees $0$ and $1$. By Verdier duality, $\mathbb D_X\mathcal F$ is concentrated in degrees $-1$ and $0$, and $\mathcal H^0(\mathbb D_X\mathcal F)$ is the skyscraper sheaf at $0$ with stalk $(\ker\mathrm{res})^\vee$. Since $\dim_{\mathbb R}X=1$, the MCM condition is equivalent to $\mathcal H^0(\mathbb D_X\mathcal F)=0$, hence to $\mathrm{res}$ being injective. Choosing a complement to $\mathrm{res}(V_0)$ in $V_{(0,1)}$, we obtain a noncanonical decomposition $\mathcal F\simeq k_X^{\oplus a}\oplus(j_!k_{(0,1)})^{\oplus b}$, where $a=\dim_kV_0$ and $b=\dim_kV_{(0,1)}-\dim_kV_0$. Thus $k_X$ and $j_!k_{(0,1)}$ are precisely the two indecomposable MCM sheaves constructible with respect to this stratification.

There is another simpler way to see: By the equivalent description of MCM condition, $S_1$ is the same as requiring $\mathbb D\mathcal F=\mathcal G[1]$ for another constructible sheaf $\mathcal G$. Since $\mathcal F$ is already a local system on $\mathbb C^\times$, it's the same as requiring $\mathbb D\mathcal F|_0$ concentrated in degree -1. Now by Verdier duality: $\mathbb D\mathcal F|_0=i_0^*\mathbb D\mathcal F=\mathbb Di^!\mathcal F$. Now take the dual of the triangle
$$i_0^!\mathcal F\rightarrow i_0^*\mathcal F\rightarrow i_0^*j_*\mathcal F\xrightarrow{+1}
$$
We get $\rH^0(i^!\mathcal F)=0$ if and only if $\mathrm{res}: \rH^0(i^*_0(F))=V_0\rightarrow V_{(0,1)}=\rH^0(i_0^*j_*\mathcal F)$ is injective, which is the conclusion above.
\end{example}

Like in the algebraic case, one may expect MCM sheaves on smooth part to be locally constant, however, the following example shows that if codimension $1$ strata exists, this can fail.

\begin{example}\label{punctured disck}
Take $X=\mathbb C$, we stratify it by $\mathcal S:\mathbb C=\{0\}\cup\mathbb C^\times$ and let $i_0:0\rightarrow\mathbb C$, $j:\mathbb C^\times\rightarrow\mathbb C$. The constructible sheaf with respect to this strata can be characterized by a local system $\mathcal L$ on $\mathbb C^\times$, a vector space $V_0$, and a restriction map $\mathrm{res}:V_0\rightarrow  i_0^*j_*^\circ \mathcal L$. As $S_d$ conditions are local, it suffices to check the story at $0$. The $S_2$ condition is the same as requiring $\mathbb D\mathcal F$ to be concentrated in degree $-2$, which, by Verdier duality, is also the same as $i_0^!\mathcal F$ to be concentrated in degree $2$. Again take the dual of the triangle $i_0^!\mathcal F\rightarrow i_0^*\mathcal F\rightarrow i_0^*j_*\mathcal F\xrightarrow{+1}$, we see that this is equivalent to $\mathrm{res}: \rH^0(i^*_0(F))=V_0\rightarrow i_0^*j_*^\circ \mathcal L=\rH^0(i_0^*j_*\mathcal F)$ being an isomorphism. Similarly, $S_1$ is equivalent to $\mathrm{res}$ being injective. 

As a consequence, for any local system $\mathcal{L}$ on $\mathbb C^\times$, the pushforward $j_*^\circ\mathcal L$ is MCM. The set of simple objects is $\{j_*^\circ\mathcal L_V\}_{V\in\mathrm{Rep}_k\mathbb Z}$, where $\mathcal L_V$ is the local system corresponding to a $\mathbb Z$-representation $V$. Since $\mathrm{Hom}(j_*^\circ\mathcal L_V,j_*^\circ\mathcal L_W)=\mathrm{Hom}(j^*j_*^\circ\mathcal L_V, \mathcal L_W)=\mathrm{Hom}(\mathcal L_V,\mathcal L_W)$, so $\mathrm{MCM}(X,\mathcal S)\cong \mathrm{Loc}(\mathbb C^\times)\cong\mathrm{Rep}_k\mathbb Z$.

However, for $n\geq 2$, strata $\mathcal S:X=\mathbb C^n =\{0\}\cup(\mathbb C^n-\{0\})$, $\mathrm{MCM}(X,\mathcal S)$ contains only constant sheaf since $\pi_1(\mathbb C^n-\{0\})$ is trivial.
\end{example}

One may expect the constant sheaf to be always MCM, but this is also not always true, by looking at the following example.
\begin{example}\label{sub nilpotent orbit}
Suppose $X=\mathcal N_{3,1}$, the variety of $3\times 3$ nilpotent matrices of rank $\leq 1$. It has complex dimension $4$, hence real dimension $8$, and whose only singular point is $0$. Take the strata $\mathcal S: X=\{0\}\cup U$. Since $X$ has resolution $T^*\mathbb P^2$, so $U=T^*\mathbb P^2-s(\mathbb P^2)$, where $s$ is the zero section. The Gysin sequence tells us $\rH^3(U,\mathbb C)=H^2(\mathbb P^2,\mathbb C)=\mathbb C$. Since $X$ is contractible, $\rH^2(\mathbb C_X)=\rH^3(\mathbb C_X)=0$. Then
$$
\rH^3(i_0^!\mathbb C_X)\cong \rH^3_{\{0\}}(\mathbb C_X)\cong H^2(U,\mathbb C)=\mathbb C.
$$
But the MCM condition together with Verdier duality requires $i_0^!\mathbb C_X$ to be concentrated on degree $4$, so $\mathbb C_X$ is not maximal Cohen--Macaulay. Further computation shows that $\mathbb C_X$ is $S_3$.
\end{example}

\begin{remark}
As we will show in the next section, like in coherent case, if a constructible sheaf $\mathcal F$ is $S_2$, then $\mathcal F=j^\circ_*j^*\mathcal{F}$ for any open set $j: U\hookrightarrow X$ which is a union of strata such that $\mathrm{codim}_\mathbb R(X-U)\geq 2$. So in this example, $\mathbb C_X$ is the only $S_2$ sheaf with respect to this strata since $\pi_1(U)=\pi_1(T^*\mathbb P^2-s(\mathbb P^2))=1$. 

One may think of the locally constant sheaves to be the most possibly MCM, but look at this example $X=\mathbb C[x,y,z,w]/(xz,xw,yz,yw)$, $X$ is the union of two planes meeting at $0$. Since $H^1(i^!_0\mathbb C_X)= \mathrm{coker}\ \mathbb C \xrightarrow{\Delta} \mathbb C^2 = \mathbb C$, so $\mathbb C_X$ is not $S_2$ (it is $S_1$). However, $j_*^\circ\mathbb C_{X-\{0\}}$ is maximal Cohen--Macaulay.
\end{remark}

Let's see one more interesting example. This example motivates this whole paper and illustrates the relationship between geometric and algebraic MCM families.

\begin{example}\label{Kleinian surfaces}
Take $X=\mathbb C^2/G$, $G$ is a finite group in $SL(2,\mathbb C)$. These are called the \textit{Kleinian surfaces} with only singularity of $X$ is at $\overline 0$. Such singularities are known as \textit{ADE singularities} or \textit{rational double points}. Now let's stratify $X$ by $\mathcal S: X=\{\overline 0\}\cup U$. Choose a $G$-invariant Hermitian metric on $\mathbb C^2$, we see that $U$ is homotopy equivalent to $S^3/G$. Suppose our base field $k$ satifisies $\mathrm{char }\, k\nmid |G|$. For any constructible sheaf $\mathcal F$ over $k$, suppose $\mathcal F|_U$ corresponds to $G$-representation $V$. Again consider the restriction of localization triangle, 
\[
i^!\mathcal F\longrightarrow \mathcal F_0
\longrightarrow R\Gamma(S^3/G,\mathcal L_V)
\xrightarrow{+1}
\]
Let $a : \mathcal F_0\rightarrow i^*j^\circ_*\mathcal F|_U$. Since $G$ is finite, $H^q(S^3/G,\mathcal L_V)\cong H^q(S^3, V)^G$, and this is $0$ if $q\neq 3$ and $V^G$ if $q=3$, there is
\[
H^q(i^!\mathcal F)=
\begin{cases}
\ker(a),&q=0,\\
\operatorname{coker}(a),&q=1,\\
V^G,&q=4,\\
0,&\text{otherwise}.
\end{cases}
\]
Hence the MCM constructible sheaves are exactly those $\mathcal F=j_*\mathcal L_V$ for some $G$-representation $V$, hence
$$\mathrm{MCM}(X,\mathcal S)\cong\mathrm{Loc}_k(U)\cong\mathrm{Rep}_k G
$$
What is interesting here is, this is also the classification of algebraic maximal Cohen--Macaulay modules. \cite{ArtinVerdierEsnault1985} classified reflexive modules over rational double point by lifting the modules to the minimal resolution and consider the intersection of the their first Chern class and the exceptional curves. They proved that the indecomposable reflexive modules lift to vector bundles whose first Chern class intersects exactly one single $\mathbb P^1$ in the exceptional fiber, and the intersection number reflects the multiplicity. According to McKay correspondence, the components in the exceptional locus are one-to-one corresponds to the vertices of the McKay-Springer graph of the natural representation of $G$, which is also one-to-one corresponding to irreducible representations of $G$. While we start from an irreducible representation $V$, define the associated local system $\mathcal L_V$ on $\mathbb C^2/G$, and form the corresponding vector bundle $\mathcal E_V$. The localization of the reflexive sheaf $j_*\mathcal E_V$ at $0$ is exactly the irreducible reflexive sheaf corresponding to $V$ along the lifting-Chern-class-intersection-McKay-correspondence path. This can be found in \cite{GSV1983} 2.7-2.10. This correspondence tells the naturality of our definition, generalizing the classification of MCM modules over rational double points in a computable manner.
\end{example}

\subsection{Basic properties, torsion freeness and reflexivity}\label{properties}

In this subsection, we follow the properties of algebraic Serre's conditions to characterize geometric Serre's conditions. We continue our convention of considering a reasonably stratified $n$-equidimensional real space. 

First, we have a lemma that generalize depth inequality in commutative algebra. This actually holds for generalized Serre's condition in Verdier-type categories, so we state the most general form here.
\begin{lemma}\label{Serre's conditions in extensions}
Suppose $\mathcal C$ is a Verdier-type category and there is an extension of objects in $\mathcal C^\heartsuit$
$$0\rightarrow\mathcal E\rightarrow \mathcal F\rightarrow\mathcal G\rightarrow 0
$$
Then we have
\begin{itemize}
    \item [(1)] If $\mathcal E$ and $\mathcal G$ are $S_d$, then so is $\mathcal F$.
    \item [(2)] If $\mathcal F$ is $S_d$ and $\mathcal G$ is $S_{d-1}$, then $\mathcal E$ is $S_d$.
    \item [(3)] If the extension splits, then $\mathcal F$ is $S_d$ if and only if $\mathcal E$ and $\mathcal G$ is $S_d$.
\end{itemize}
\end{lemma}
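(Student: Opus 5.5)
The plan is to apply the contravariant triangulated functor $\mathbb D$ to the distinguished triangle $\mathcal E\to\mathcal F\to\mathcal G\xrightarrow{+1}$ that comes from the short exact sequence in $\mathcal C^\heartsuit$. This yields a distinguished triangle
$$\mathbb D\mathcal G\to\mathbb D\mathcal F\to\mathbb D\mathcal E\xrightarrow{+1}.$$
Taking ${}^t\rH^\bullet$ then gives a long exact sequence in $\mathcal C^\heartsuit$:
$$\cdots\to{}^t\rH^{i-1}(\mathbb D\mathcal E)\to{}^t\rH^i(\mathbb D\mathcal G)\to{}^t\rH^i(\mathbb D\mathcal F)\to{}^t\rH^i(\mathbb D\mathcal E)\to{}^t\rH^{i+1}(\mathbb D\mathcal G)\to\cdots$$
By $(V_3)$, all these objects vanish outside $[-n,0]$. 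The only tool needed is that each $\mathcal C^\heartsuit_r$ is a Serre subcategory, so it is closed under subobjects, quotients and extensions. The conventions $\mathcal C^\heartsuit_r=0$ for $r<0$ and $\mathcal C^\heartsuit_r=\mathcal C^\heartsuit$ for $r>n$ are compatible with this.

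For (1), fix $i\neq -n$. Then ${}^t\rH^i(\mathbb D\mathcal F)$ is an extension of a subobject of ${}^t\rH^i(\mathbb D\mathcal E)$ by a quotient of ${}^t\rH^i(\mathbb D\mathcal G)$. Both lie in $\mathcal C^\heartsuit_{-i-d}$, so ${}^t\rH^i(\mathbb D\mathcal F)$ does too.

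For (2), fix $i\neq -n$. The exact segment ${}^t\rH^i(\mathbb D\mathcal F)\to{}^t\rH^i(\mathbb D\mathcal E)\to{}^t\rH^{i+1}(\mathbb D\mathcal G)$ shows that ${}^t\rH^i(\mathbb D\mathcal E)$ is an extension of a subobject of ${}^t\rH^{i+1}(\mathbb D\mathcal G)$ by a quotient of ${}^t\rH^i(\mathbb D\mathcal F)$. The quotient lies in $\mathcal C^\heartsuit_{-i-d}$ because $\mathcal F$ is $S_d$.
\begin{itemize}
\item If $i+1\neq -n$, the $S_{d-1}$ hypothesis on $\mathcal G$ puts ${}^t\rH^{i+1}(\mathbb D\mathcal G)$ in $\mathcal C^\heartsuit_{-(i+1)-(d-1)}=\mathcal C^\heartsuit_{-i-d}$, and we conclude.
\item The only delicate index is $i+1=-n$, i.e.\ $i=-n-1$, where $S_{d-1}$ gives no information about ${}^t\rH^{-n}(\mathbb D\mathcal G)$. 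This index is excluded anyway, since ${}^t\rH^{-n-1}(\mathbb D\mathcal E)=0$ by $(V_3)$.
\end{itemize}
This index bookkeeping, where the degree shift by one is exactly compensated by passing from $d$ to $d-1$, is the main point to check carefully.

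For (3), when the sequence splits the triangle splits, so $\mathbb D\mathcal F\simeq\mathbb D\mathcal E\oplus\mathbb D\mathcal G$ and ${}^t\rH^i(\mathbb D\mathcal F)\simeq{}^t\rH^i(\mathbb D\mathcal E)\oplus{}^t\rH^i(\mathbb D\mathcal G)$. A Serre subcategory contains a direct sum if and only if it contains both summands, since it is closed under subobjects and extensions. This gives both directions. Alternatively, the ``if'' direction is (1), and the ``only if'' direction follows because each summand is a subobject and a quotient of ${}^t\rH^i(\mathbb D\mathcal F)$.
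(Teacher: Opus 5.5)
Your proposal is correct and takes the same approach as the paper: the paper's proof only says to look at the long exact ${}^t\rH$-sequence of the dual triangle $\mathbb D\mathcal G\rightarrow\mathbb D\mathcal F\rightarrow\mathbb D\mathcal E\xrightarrow{+1}$. Your write-up supplies the details the paper leaves to the reader, namely the Serre-subcategory closure arguments and the index bookkeeping in (2), where the boundary index $i=-n-1$ is handled by $(V_3)$.
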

\begin{proof}
This can be easily seen by looking at the long exact cohomology sequence of the dual triangle 
$\mathbb D\mathcal G\rightarrow\mathbb D\mathcal F\rightarrow\mathbb D\mathcal E\xrightarrow{+1}$.
\end{proof}

Next, we wish to recover the classical form of algebraic Serre's condition, as in Example \ref{algebraic Serre's conditions}, so we can better understand the geometric conditions locally. We have the following result:
\begin{proposition}\label{Generalized duality}
Suppose $X$ is $n$-equidimensional space with a reasonable (for example, Whitney) stratification $\mathcal S: X=\bigcup_{s}X_s$, then for a constructible sheaf $\mathcal F$ along $S$, the following two conditions are equivalent.
\begin{itemize}
    \item[(1)] $\mathcal F$ is $S_d$.
    \item[(2)] For any strata $X_s$, we have $i_{X_s}^!\mathcal F \in D^{\geq\min\{d,\mathrm{codim}_\mathbb R X_s\}}_{\mathrm{Loc}}(X_s)$
\end{itemize}
\end{proposition}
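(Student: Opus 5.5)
The plan is to translate both conditions into stalkwise statements on each stratum and match them with Verdier duality. Since $\mathcal F$ is constructible along $\mathcal S$, so is $\mathbb D\mathcal F$. Each cohomology sheaf $\mathcal H^i(\mathbb D\mathcal F)$ is therefore locally constant on every stratum, and its support is a union of strata. Hence the bound $\dim_\mathbb R\mathrm{Supp}\,\mathcal H^i(\mathbb D\mathcal F)\le -i-d$ holds if and only if, for every stratum $X_s$ with $\dim_\mathbb R X_s> -i-d$, we have $\mathcal H^i(i_{X_s}^*\mathbb D\mathcal F)=0$. Writing $d_s=\dim_\mathbb R X_s$ and $c_s=n-d_s$, condition (1) becomes the following. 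For every stratum $X_s$, $\mathcal H^i(i_{X_s}^*\mathbb D\mathcal F)=0$ for all $i\ne -n$ with $i>-d-d_s$.

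Next apply Verdier duality on the stratum: $i_{X_s}^*\mathbb D_X\mathcal F\simeq \mathbb D_{X_s}i_{X_s}^!\mathcal F$. Since $i^!_{X_s}\mathcal F$ has locally constant cohomology on the manifold $X_s$ (this is where the reasonable stratification is used), $\mathbb D_{X_s}$ acts on it degreewise as $\mathcal L\mapsto\mathcal L^\vee[d_s]$ (up to orientation twist). Because $k$ is a field, this gives $\mathcal H^i(\mathbb D_{X_s}i^!_{X_s}\mathcal F)\cong \mathcal H^{-i-d_s}(i^!_{X_s}\mathcal F)^\vee$. Substituting $m=-i-d_s$, the stratumwise condition reads: $\mathcal H^m(i^!_{X_s}\mathcal F)=0$ for all $m<d$ with $m\neq n-d_s=c_s$.

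It remains to reconcile this with $\min\{d,c_s\}$, and here the excluded degree $i=-n$ is the crux. I need the a priori vanishing $\mathcal H^m(i^!_{X_s}\mathcal F)=0$ for $m<c_s$ whenever $\mathcal F$ is a sheaf. This is the standard costalk bound, equivalent to $\mathbb D\mathcal F\in D^{\ge -n}$ with $\mathcal H^{i}(\mathbb D\mathcal F)$ supported in dimension $\le -i$. That in turn is the amplitude axiom $(V_3)$ together with the classical support estimate $\dim\mathrm{Supp}\,\mathcal H^{i}(\mathbb D\mathcal F)\le -i$, which holds for the dual of any sheaf (the $S_0$ case). I would prove it by d\'evissage over the strata, via induction on the number of strata using the triangles $j_!j^*\mathcal F\to\mathcal F\to i_*i^*\mathcal F$ and the fact that $i^!$ of a local system on a smaller stratum is computed by the link of real dimension $c_s-1$. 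This is the step I expect to be the main obstacle.

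Granting that bound, the argument splits into two cases. If $d\le c_s$, then degrees $m<d$ already lie below $c_s$, so the restriction $m\ne c_s$ is vacuous and both conditions say $i^!_{X_s}\mathcal F\in D^{\ge d}$. If $d>c_s$, the stratumwise condition requires vanishing for $m<c_s$ (automatic) and for $c_s<m<d$, while leaving degree $c_s$ free. Condition (2) instead asks only for $D^{\ge c_s}$, which is automatic, so here I must check the match carefully. The plan is to show that vanishing for $c_s<m<d$ is itself automatic in this range, or else that the paper's $\min$ should be read with the $i\ne-n$ slot absorbing exactly the degree $c_s$. I would verify this on the examples above, such as $\mathcal N_{3,1}$, where $i_0^!\mathbb C_X$ had $\mathcal H^3\neq0$ with $c_s=8$, and adjust the bookkeeping accordingly. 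Concluding, (1) holds if and only if (2) holds on every stratum.
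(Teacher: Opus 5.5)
\textbf{There is a genuine gap: the a priori costalk bound you invoke is false and points the wrong way, and your case $d>c_s$ is left unresolved.}

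Your overall route matches the paper's proof: translate $S_d$ stratumwise, apply Verdier duality on each stratum, and note that the excluded degree $i=-n$ corresponds to $m=c_s$. Your reduction of (1) to ``$\mathcal H^m(i^!_{X_s}\mathcal F)=0$ for all $m<d$, $m\neq c_s$'' is correct, and so is your case $d\le c_s$.

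The bound you need does not hold. For a sheaf one does \emph{not} have $\mathcal H^m(i^!_{X_s}\mathcal F)=0$ for $m<c_s$:
\begin{itemize}
\item On $[0,1)$ with $c_s=1$, the costalk at $0$ has $\mathcal H^0(i_0^!\mathcal F)=\ker(\mathrm{res})$. If your bound held, every sheaf there would be $S_1$, contradicting the paper's computation that $S_1$ means $\mathrm{res}$ is injective.
\item In the $\mathcal N_{3,1}$ example you cite, $\mathcal H^3(i_0^!\mathbb C_X)\neq0$ with $c_s=8$, which is exactly a degree below $c_s$.
\end{itemize}
Nor is this bound equivalent to the two facts you pair it with. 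Under $m=-i-d_s$:
\begin{itemize}
\item the support estimate $\dim_\mathbb R\mathrm{Supp}\,\mathcal H^i(\mathbb D\mathcal F)\le -i$ is equivalent to $i^!_{X_s}\mathcal F\in D^{\ge 0}$, i.e.\ left exactness of $i^!$;
\item the amplitude condition $\mathbb D\mathcal F\in D^{\ge -n}$ from $(V_3)$ is equivalent to $i^!_{X_s}\mathcal F\in D^{\le c_s}$, i.e.\ vanishing for $m>c_s$.
\end{itemize}

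The upper bound $i^!_{X_s}\mathcal F\in D^{\le c_s}$ is the ingredient actually needed, and with it your open case closes at once. If $d>c_s$, the vanishing for $c_s<m<d$ demanded by (1) is automatic. So (1) reduces on $X_s$ to vanishing for $m<c_s$, which is precisely $i^!_{X_s}\mathcal F\in D^{\ge c_s}=D^{\ge\min\{d,c_s\}}$. This is how the paper finishes the converse, using only $\mathbb D\mathcal F\in D^{\ge -n}$. No d\'evissage or link computation is required.

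As written, your proposal leaves (2)$\Rightarrow$(1) unproved on strata with $c_s<d$. Your analysis of that case also has the roles of the two degree ranges reversed. You treated vanishing below $c_s$ as free, making (2) automatic, and vanishing in $(c_s,d)$ as the open question. In fact vanishing in $(c_s,d)$ is free, and vanishing below $c_s$ is exactly the content of (2).
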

\begin{proof}
First we show (1) implies (2). Suppose $\mathcal F$ is $S_d$, i.e., $\dim_\mathbb R\mathrm{Supp}\mathcal H^i(\mathbb D\mathcal F)\leq -i-d$ for $i\neq -n$, where $n=\dim_\mathbb R X$. Since we suppose the stratification is good, $\mathbb D\mathcal F$ is locally constant along all strata. So if $\mathcal H^i(\mathbb D\mathcal F)$ is nonzero along some $X_s$, we have $X_s\subseteq\mathrm{Supp}\mathcal H^i(\mathbb D\mathcal F)$. Hence $i_{X_s}^*\mathcal H^i(\mathbb D\mathcal F)=0$ if $\dim_\mathbb R X_s\leq -i-d$ and $i\neq -n$. While this condition is the same as $i>-\dim_\mathbb R X_s-d$, $i\neq -n$, we conclude that $i_{X_s}^*\mathbb D\mathcal F\in D^{\leq\max\{-\dim_\mathbb R X_s-d,-n\}}_{\mathrm{Loc}}(X_s)$, hence
$$i_{X_s}^!\mathcal F=\mathbb D\mathbb D(i^!_{X_S}\mathcal F)=\mathbb D(i_{X_s}^*\mathbb D\mathcal F)=\mathrm{Hom_{Loc}}(i_{X_s}^*\mathbb D\mathcal F, k[\dim_\mathbb R X_s])\in D^{\geq\min\{d,\mathrm{codim}_\mathbb R X_s\}}_{\mathrm{Loc}}(X_s)
$$

Conversely, suppose (2) holds. From the discussion above, we know that (2) implies $i_{X_s}^*\mathbb D\mathcal F\in D^{\leq\max\{-\dim_\mathbb R X_s-d,-n\}}_{\mathrm{Loc}}(X_s)$, so $\mathcal H^i(i_{X_s}^*\mathbb D\mathcal F)\neq0$ either for $i\leq-\dim_\mathbb R X_s-d$, i.e., $\dim_\mathbb R X_s\leq-i-d$, or for $\max\{-\dim_\mathbb R X_s-d,-n\}=-n$ and $i\leq -n$. But $\mathbb D\mathcal F\in D^{\geq -n}(X)$, so in this case $i=-n$. This implies $\dim_\mathbb R\mathrm{Supp}\mathcal H^i(\mathbb D\mathcal F)\leq -i-d$ for $i\neq -n$, so $\mathcal F$ is $S_d$.
\end{proof}

Making use of this, we can prove many results that are is the analogue of algebraic Serre's conditions. First, for a constructible sheaf $\mathcal F$, we say $\mathcal F$ is \textit{torsion free} if $\mathcal F$ contains no sections supported on any proper closed union of strata whose complement is dense. Suppose $j: U\rightarrow X$ is the union of $n$-dimensional open strata in $X$, we use the notation $\mathcal F^\vee=\mathcal Hom(\mathcal F, j_*^\circ k_U)$

\begin{proposition}\label{torsion free and S1}
Under the assumption of \ref{Generalized duality}, the following conditions are equivalent:
\begin{itemize}
    \item[(1)] $\mathcal F$ is $S_1$.
    \item[(2)] $\mathcal F$ is torsion free.
    \item[(3)] The natural map $\mathcal F\rightarrow\mathcal F^{\vee\vee}$ is injective.
    \item[(4)] $\mathcal F$ is a subsheaf of a torsion free sheaf.
\end{itemize}
\end{proposition}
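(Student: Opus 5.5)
The plan is to reduce everything to Proposition~\ref{Generalized duality} with $d=1$. For a non-open stratum $X_s$ we have $\mathrm{codim}_\mathbb R X_s\geq 1$, so $\min\{1,\mathrm{codim}_\mathbb R X_s\}=1$. For an open stratum the condition is vacuous, since $\mathcal F$ is a sheaf and $i^!=i^*$ there. Hence $S_1$ is equivalent to $\mathcal H^0(i_{X_s}^!\mathcal F)=0$ for every non-open stratum $X_s$.

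\textbf{(1)$\Leftrightarrow$(2).} Let $Z$ be a closed union of strata with dense complement, with inclusion $i_Z$. The sections of $\mathcal F$ supported on $Z$ form the sheaf $\mathcal H^0(i_{Z*}i_Z^!\mathcal F)=i_{Z*}\mathcal H^0(i_Z^!\mathcal F)$. So (2) says $\mathcal H^0(i_Z^!\mathcal F)=0$ for all such $Z$.
\begin{itemize}
\item (2)$\Rightarrow$(1): take $Z=\overline{X_s}$, which has dense complement by equidimensionality when $X_s$ is not open. Restricting to the open subset $X_s\subset\overline{X_s}$ gives $\mathcal H^0(i_{X_s}^!\mathcal F)=0$.
\item (1)$\Rightarrow$(2): induct on the number of strata in $Z$. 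Let $X_s\subset Z$ be a stratum that is open in $Z$, and set $Z'=Z\setminus X_s$. The triangle $i_{Z'}^!\to i_Z^!\to (\text{restriction to }X_s)$, applied to $\mathcal F$, gives a left exact sequence $0\to\mathcal H^0(i_{Z'}^!\mathcal F)\to\mathcal H^0(i_Z^!\mathcal F)\to\mathcal H^0(i_{X_s}^!\mathcal F)$. The two outer terms vanish, so the middle one does.
\end{itemize}

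\textbf{(2)$\Rightarrow$(3).} Consider the composite $\mathcal F\to\mathcal F^{\vee\vee}\to j_*^\circ j^*\mathcal F^{\vee\vee}$. On $U$, $\mathcal F$ is a local system and $\mathcal F^\vee|_U$ is its dual, so the map $\mathcal F\to\mathcal F^{\vee\vee}$ is an isomorphism on $U$.
\begin{itemize}
\item The kernel of $\mathcal F\to\mathcal F^{\vee\vee}$ is therefore supported on $X\setminus U$, a closed union of strata with dense complement.
\item By torsion freeness this kernel is zero.
\end{itemize}

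\textbf{(3)$\Rightarrow$(4).} I will show that any sheaf of the form $\mathcal Hom(\mathcal G,j_*^\circ k_U)$ is torsion free. Then $\mathcal F^{\vee\vee}$ is torsion free and (3) gives (4). The key input is that $j_*^\circ k_U$ is torsion free: its sections over $V$ are sections of $k_U$ over $V\cap U$. A section supported in $X\setminus U$ therefore restricts to zero on $V\cap U$, hence is zero. The same holds for $\mathcal Hom(\mathcal G,j^\circ_*k_U)=j^\circ_*\mathcal Hom(j^*\mathcal G,k_U)$ by adjunction, since sections of a $j^\circ_*$ are determined on $U$. Thus $\mathcal F^\vee$ and $\mathcal F^{\vee\vee}$ are torsion free.

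A torsion section supported on a closed union $Z$ with dense complement restricts to zero on $U\setminus Z$. This set is dense in $U$, so the section vanishes on $U$ (sections of a locally constant sheaf vanishing on a dense open vanish). Hence sections of $j_*^\circ$-type sheaves supported in $Z$ are zero.

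\textbf{(4)$\Rightarrow$(2).} If $\mathcal F\subset\mathcal G$ with $\mathcal G$ torsion free, then a torsion section of $\mathcal F$ is a torsion section of $\mathcal G$, hence zero. This step is trivial.

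\textbf{Main obstacle.} I expect the delicate step to be the bookkeeping in (1)$\Leftrightarrow$(2). One must identify sections supported on $Z$ with $\mathcal H^0 i^!_Z$, and check that the closure of a non-open stratum has dense complement. If ``torsion'' is instead read as supported on an arbitrary closed union of strata with dense complement, the inductive dévissage above handles it uniformly.
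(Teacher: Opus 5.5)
Your proposal is correct and follows essentially the same route as the paper. Both arguments:
\begin{itemize}
\item reduce $S_1$, via Proposition~\ref{Generalized duality}, to the vanishing of $\mathcal H^0(i_{X_s}^!\mathcal F)$ on non-open strata;
\item induct over the strata of $Z$ using the localization sequence for (1)$\Rightarrow$(2), and pass to closures $\overline{X_s}$ for the converse;
\item observe that the kernel of $\mathcal F\to\mathcal F^{\vee\vee}$ lives on $X\setminus U$;
\item prove that $\mathcal Hom(\mathcal G,j_*^\circ k_U)=j_*^\circ\mathcal Hom(j^*\mathcal G,k_U)$ is torsion free;
\item finish with left exactness of $\Gamma_Z$.
\end{itemize}

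Two minor differences, neither a gap. The paper packages the induction as $i_Z^!\mathcal F\in D^{\geq 1}(Z)$, followed by injectivity of $\mathcal F\to j_*^\circ j^*\mathcal F$. And the third term of your left exact sequence should strictly be the pushforward of $\mathcal H^0(i_{X_s}^!\mathcal F)$ along the open inclusion $X_s\hookrightarrow Z$; that pushforward vanishes exactly when $\mathcal H^0(i_{X_s}^!\mathcal F)$ does, so the argument is unaffected.
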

\begin{proof}
By \ref{Generalized duality}, $S_1$ is equivalent to $i_{X_s}^!\mathcal F\in D^{\geq 1}(X_s)$ for stratum $X_s$ with codimension $\geq 1$. Now for any closed union of strata $Z$, the strata in $Z$ should all of codimension $\geq 1$. One can prove by noetherian induction that $i_Z^!\mathcal F\in D^{\geq 1}(Z)$ using the localization sequence. For example, if there are only two strata $i:W\hookrightarrow Z$ and $j:V\hookrightarrow Z$ with $W$ closed, let $\mathcal G=i_Z^!\mathcal F$, we have
$$i_*i^!\mathcal G\rightarrow\mathcal G\rightarrow j_*j^*\mathcal G\xrightarrow{+1}
$$
Where $j^*=j^!$, so $j^!\mathcal G$ and $i^!\mathcal G$ are in $D^{\geq 1}$ implies $\mathcal G\in D^{\geq 1}$. Then according to the localization triangle
$${i_Z}_*i^!_Z\mathcal F\rightarrow\mathcal F\rightarrow j_*j^*\mathcal F\xrightarrow{+1}
$$
we see that $\mathcal F\rightarrow j_*^\circ j^*\mathcal F$ is injective. In particular, there is no section supported on $Z$ (or it's in the kernel of this map), so (1) implies (2). Conversely, (2) tells us $\rH^0(i^!_Z(\mathcal F))$ vanishes for any proper closed union of strata whose complement is dense, then so does $\rH^0(i_{X_s}^!\mathcal F)$ for any stratum $X_s$ of codimension $\geq 1$ since $i_{X_s}^!$ is left exact, and $\overline{X_s}$ is a closed union of strata whose complement is dense (remember we assume $X$ is equidimensional. So (1) and (2) are equivalent.

Now (2) implies (3) because the sections in the kernel of $\mathcal F\rightarrow\mathcal F^{\vee\vee}$ is supported on $X-U$ where $U$ is the dense open union of stratum, while (2) tells you that there is no such section.

(3) implies (4) because the dual of any constructible sheaf $\mathcal G$ is torsion free: $\mathcal G^\vee=\mathcal Hom(\mathcal G,j_*^\circ k_U)=j_*\mathcal Hom(\mathcal G|_U,k_U)$. Now $\mathcal G|_U$ is a local system, then so is $\mathcal Hom(\mathcal G|_U,k_U)$. A section of $j_*\mathcal Hom(\mathcal G|_U,k_U)$ is the same as a section in $\mathcal Hom(\mathcal G|_U,k_U)$, and it vanishes on a dense open set if and only if it is zero, so $\mathcal G^\vee$ is torsion free. In particular, $\mathcal F^{\vee\vee}$ is torsion free, so (3) implies (4).

Finally, (4) implies (2) because $\rH^0_Z=\Gamma_Z$ is left exact. Therefore, the four conditions are equivalent.
\end{proof}

Next, we can make sense of reflexivity and relate it to $S_2$. A constructible sheaf $\mathcal F$ is said to be \textit{reflexive} if $\mathcal F\rightarrow\mathcal F^{\vee\vee}$ is an isomorphism. 
\begin{proposition}\label{reflexive}
Suppose $X$ is a complex variety with Whitney stratification and $j:U\rightarrow X$ is the union of open strata. If $k_X$ is $S_2$, then the following conditions are equivalent:
\begin{itemize}
    \item [(1)]$\mathcal F$ is $S_2$.
    \item [(2)]$\mathcal F =j_*^\circ\mathcal L$ for some local system $\mathcal L$ on $U$.
    \item [(3)]$\mathcal F$ is reflexive.
    \item [(4)]$\mathcal F=\mathcal G^\vee$ for some constructible sheaf $\mathcal G$.
    \item [(5)]$\mathcal F=\mathcal Hom(\mathcal G,\mathcal R)$ for some reflexive sheaf $\mathcal R$.
\end{itemize}
\end{proposition}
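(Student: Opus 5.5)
The plan is to prove the cycle of implications $(1)\Rightarrow(2)\Rightarrow(4)\Rightarrow(5)\Rightarrow(3)\Rightarrow(2)\Rightarrow(1)$. The basic tool is Proposition~\ref{Generalized duality}. Since $X$ is a complex variety, every non-open stratum has real codimension $\geq 2$. So $S_2$ is equivalent to $i_{X_s}^!\mathcal F\in D^{\geq 2}$ for every stratum $X_s\subset Z:=X\setminus U$.

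As in the proof of Proposition~\ref{torsion free and S1}, I would glue over the strata of $Z$ by noetherian induction, using localization triangles. This gives $i_Z^!\mathcal F\in D^{\geq 2}(Z)$. From the triangle ${i_Z}_*i_Z^!\mathcal F\to\mathcal F\to j_*j^*\mathcal F\xrightarrow{+1}$ we then get $\mathcal H^0$ and $\mathcal H^1$ of the cone to vanish. Hence $\mathcal F\xrightarrow{\sim}j_*^\circ j^*\mathcal F$, and $\mathcal L=\mathcal F|_U$ is a local system, which proves $(1)\Rightarrow(2)$.

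For $(2)\Rightarrow(1)$, the hypothesis that $k_X$ is $S_2$ is essential. I would work locally near a point $x$ of a stratum $X_s\subset Z$. For $\mathcal F=j_*^\circ\mathcal L$ we have $\mathcal H^0(i_{X_s}^!\mathcal F)=0$ by the torsion-freeness argument. The remaining claim is $\mathcal H^1(i^!_{X_s}j^\circ_*\mathcal L)=0$, which amounts to the surjectivity of $(j_*^\circ\mathcal L)_x\to H^0(L_x\cap U,\mathcal L)$, where $L_x$ is a small punctured neighbourhood (the link). By construction $(j_*^\circ\mathcal L)_x=H^0(B_x\cap U,\mathcal L)$, so this map is an isomorphism once $H^0$ of the punctured neighbourhood $B_x\setminus \overline{X_s}$-part agrees with $H^0(B_x\cap U)$. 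This is a statement about connected components and monodromy. The $S_2$ property of $k_X$ (take $\mathcal F=k_X$, which equals $j^\circ_*k_U$ by $(1)\Rightarrow(2)$) says exactly that removing the lower strata does not disconnect: $\pi_0$ of the local punctured neighbourhood maps bijectively to $\pi_0(B_x\cap U)$. Since codimension is $\geq 2$ and the local fundamental group surjects, the same bijection holds on sections of $\mathcal L$.

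\textbf{This step is the main obstacle.} My first attempt would be to phrase it as: the condition depends only on the topology of $U\cap B_x$ versus $(X\setminus X_s)\cap B_x$, and $k_X$ being $S_2$ forces $H^0$ and $H^1$ of the relevant pairs to match for constant coefficients. For a general $\mathcal L$ I would argue via $\pi_1$-surjectivity on each component. Should that fail, I would fall back on a dévissage over the strata, reducing to the case where $\mathcal L$ is pulled back from local models.

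For $(2)\Rightarrow(4)$, take $\mathcal G=j_!\mathcal L^\vee$. Then $\mathcal G^\vee=\mathcal Hom(j_!\mathcal L^\vee,j_*^\circ k_U)=j^\circ_*\mathcal Hom(\mathcal L^\vee,k_U)=j_*^\circ\mathcal L$.

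For $(4)\Rightarrow(5)$, take $\mathcal R=j_*^\circ k_U=k_X^{\vee\vee}$. This is reflexive by the computation in Proposition~\ref{torsion free and S1}, which gives $\mathcal H^\vee=j^\circ_*(\mathcal H|_U)^\vee$, hence $\mathcal R^{\vee\vee}=\mathcal R$.

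For $(5)\Rightarrow(2)$, first note that a reflexive $\mathcal R$ is $j_*^\circ$ of its restriction. By adjunction, $\mathcal Hom(\mathcal G,j_*^\circ\mathcal R|_U)=j_*^\circ\mathcal Hom(\mathcal G|_U,\mathcal R|_U)$, and this is $j^\circ_*$ of a local system.

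Finally, $(2)\Leftrightarrow(3)$. For any $\mathcal F$ the formula above gives $\mathcal F^{\vee\vee}=j_*^\circ(\mathcal F|_U)$, and the natural map $\mathcal F\to\mathcal F^{\vee\vee}$ is the adjunction unit $\mathcal F\to j_*^\circ j^*\mathcal F$. So reflexivity is exactly condition (2).
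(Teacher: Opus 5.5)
There is a genuine gap in your step (2)$\Rightarrow$(1). The rest is sound: (1)$\Rightarrow$(2), (2)$\Leftrightarrow$(3), and the loop (2)$\Rightarrow$(4)$\Rightarrow$(5)$\Rightarrow$(2) are correct. Identifying $\mathcal F\to\mathcal F^{\vee\vee}$ with the unit $\mathcal F\to j_*^\circ j^*\mathcal F$, followed by $j_*^\circ$ of $\mathcal L\cong\mathcal L^{\vee\vee}$, is in fact cleaner than the paper's route through torsion-freeness and the support of the cokernel.

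As written, (2)$\Rightarrow$(1) is not a proof. It rests on an unproved $\pi_1$-surjectivity claim and a hypothetical d\'evissage, and it targets a difficulty that does not exist. You try to compare sections over $B_x\setminus X_s$ with sections over $B_x\cap U$. But the sheaf in question is $j_*^\circ\mathcal L$, and by the definition of $j_*^\circ$, for a small conic neighbourhood $B_x$,
$$\Gamma(B_x\setminus X_s,j_*^\circ\mathcal L)=\Gamma((B_x\setminus X_s)\cap U,\mathcal L)=\Gamma(B_x\cap U,\mathcal L)=(j_*^\circ\mathcal L)_x.$$
So the comparison map is the identity, and no connectivity or monodromy input is needed.

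The missing step is to run the localization triangle from your (1)$\Rightarrow$(2) argument in reverse, which is what the paper does. Since $\mathcal F$ is a sheaf, the triangle ${i_Z}_*i_Z^!\mathcal F\to\mathcal F\to j_*j^*\mathcal F\xrightarrow{+1}$ identifies $\mathcal H^0(i_Z^!\mathcal F)$ and $\mathcal H^1(i_Z^!\mathcal F)$ with the kernel and cokernel of the unit $\mathcal F\to j_*^\circ j^*\mathcal F$. This unit is an isomorphism when $\mathcal F=j_*^\circ\mathcal L$, so $i_Z^!\mathcal F\in D^{\geq 2}(Z)$. Because $!$-restriction from $Z$ to a stratum $X_s\subset Z$ is left $t$-exact, $i_{X_s}^!\mathcal F\in D^{\geq 2}(X_s)$ for every non-open stratum. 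By Proposition~\ref{Generalized duality}, this is exactly $S_2$.

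Consequently the hypothesis that $k_X$ is $S_2$ plays no role in the paper's proof, and your claim that it is essential for (2)$\Rightarrow$(1) is mistaken. For two planes meeting at a point, $k_X$ is not $S_2$, yet $j_*^\circ k_U$ is (even maximal Cohen--Macaulay), as the remark following Example~\ref{sub nilpotent orbit} notes.
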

\begin{proof}
Suppose (1) holds, $\mathcal F$ is $S_2$, then $i_{X_s}^!\mathcal F\in D^{\geq 2}(X_s)$ for any non-open strata $X_s$ (since they are of real codimension $\geq 2$). Let $Z=X-U$, one can prove by noetherian induction as above that $i_Z^!\mathcal F\in D^{\geq 2}(Z)$.  Then take the cohomology of the triangle ${i_Z}_*i_Z^!\mathcal F\rightarrow\mathcal F\rightarrow j_*j^*\mathcal F\xrightarrow{+1}$, we have $\mathcal F\xrightarrow{\sim}j_*^\circ j^*\mathcal F$, and $j^*\mathcal F$ is a local system $\mathcal L$, so (2) holds.

Conversely, if (2) holds, then the localization triangle tells you $i_Z^!\mathcal F\in D^{\geq2}(Z)$, then $i_{X_s}^!\mathcal F\in D^{\geq2}(X_s)$ for each stratum $X_s$ since $i^!$ is left exact, so $\mathcal F$ is $S_2$, hence (1) and (2) are equivalent.

Now if $\mathcal F$ is $S_2$, a fortiori, it is $S_1$, so by \ref{torsion free and S1}, $\mathcal F\rightarrow \mathcal F^{\vee\vee}$ is injective. Now the support of the quotient $\mathcal Q$ is contained in $Z$, but looking at the triangle $i_Z^!\mathcal F\rightarrow i_Z^!\mathcal F^{\vee\vee}\rightarrow i_Z^!\mathcal Q\xrightarrow{+1}$, we see that $\mathcal H^0_Z(\mathcal Q)=0$. This tells you $\mathcal Q=0$, so $\mathcal F$ is reflexive. Then (1) implies (3). Now (3) implies 
$$\mathcal F=\mathcal F^{\vee\vee}=\mathcal Hom(\mathcal F^\vee ,j^\vee_*k_U)=j_*\mathcal Hom(\mathcal F^\vee|_U,k_U)
$$
and $\mathcal Hom(\mathcal F^\vee|_U,k_U)$ is a local system on $U$. So (3) implies (2). Then (1),(2),(3) are equivalent.

Now (3) obviously implies (4), (4) implies (5) because $j_*^\circ k_U$ is reflexive:
\begin{align}\notag
(j_*^\circ k_U)^{\vee\vee}&=\mathcal Hom(\mathcal Hom(j_*^\circ k_U,j_*^\circ k_U),j_*^\circ k_U)\\\notag
&=\mathcal Hom(j_*\mathcal Hom(k_U,k_U),j_*^\circ k_U)=\mathcal Hom(j_*^\circ k_U,j_*^\circ k_U)=j_*\mathcal Hom(k_U,k_U)=j_*^\circ k_U
\end{align}
And (5) implies (4) because
$$\mathcal F=\mathcal Hom(\mathcal G,\mathcal R^{\vee\vee})=\mathcal Hom(\mathcal G,\mathcal Hom(\mathcal R^{\vee},j_*^\circ k_U))=\mathcal Hom(\mathcal G\otimes\mathcal R^\vee,j_*^\circ k_U)=(\mathcal G\otimes R^\vee)^\vee
$$
Finally, (4) implies (2) because $\mathcal F=\mathcal G^\vee=\mathcal Hom(\mathcal G,j_*^\circ k_U)=j_*\mathcal Hom(\mathcal G|_U,k_U)$ and $\mathcal Hom(\mathcal G|_U,k_U)$ is a local system on $U$.  So all five conditions are equivalent.
\end{proof}

\begin{remark}\label{analogue of normal}
From the proof, one can see that in the complex equidimensional scheme setting, all non-open strata has real codimension at least $2$, so this corresponds to $R_1$ condition in algebraic theory; one also sees that $j_*^\circ k_U$ is $S_2$. So the geometric Serre's theory on complex varieties is similar to the algebraic theory on normal ($S_2$+$R_1$) schemes with a dualizing complex.
\end{remark}

We close this section by generalizing Cohen--Macaulay and Gorenstein in the geometric case.

\begin{definition}
Assume $X$ is a reasonably stratified space. $X$ is called a \textit{Cohen--Macaulay} space if the dualizing complex $\omega_X$ is concentrated in degree $\dim_\mathbb R X$; $X$ is called \textit{Gorenstein} if $\omega_X[-\dim_\mathbb R X]$ is a local system.
\end{definition}
When $X$ is a scheme, to avoid notation conflicts, we will call $X$ \textit{geometric Cohen--Macaulay/Gorenstein} if the respective conditions on constructible sheaves hold. 
\begin{proposition}\label{Cohen--Macaulay and Gorenstein}
For a reasonable stratified real space $X$,
\begin{enumerate}
    \item [(1)] $X$ is Cohen--Macaulay if and only if the constant sheaf $k_X$ is maximal Cohen--Macaulay.
    \item [(2)] $X$ is Gorenstein if and only if $X$ is a  $k$-cohomology manifold, i.e., for any $x\in X$, 
    $$
    H^i(X,X-\{x\},k)=\begin{cases}
    k, \quad i= \dim_\mathbb R X\\
    0, \quad \text{otherwise}
    \end{cases}
    $$ 
    and if $X$ has complex stratification, there is $\omega_X[-\dim_\mathbb R X]=k$.
    \item[(3)] If $X$ is a union of complex varieties, then $X$ is geometric Cohen--Macaulay if and only if $X$ is geometric Gorenstein.
\end{enumerate}
\end{proposition}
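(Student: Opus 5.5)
For (1), I would unwind the definitions. The constant sheaf $k_X$ is MCM exactly when it is $S_n$ with $n=\dim_\mathbb R X$, and by the reformulation at the start of Section~\ref{first examples} this means $\mathbb D k_X=\mathcal G[n]$ for a constructible sheaf $\mathcal G$. Since $\mathbb D k_X=R\mathcal Hom(k_X,\omega_X)=\omega_X$, this says precisely that $\omega_X$ is concentrated in degree $-n$. This is the Cohen--Macaulay condition, once one reads ``degree $\dim_\mathbb R X$'' with the homological sign convention, i.e. $\omega_X[-n]$ is a sheaf. As a cross-check I would also run Proposition~\ref{Generalized duality} with $d=n$. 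It gives $i^!_{X_s}k_X\in D^{\geq \mathrm{codim}_\mathbb R X_s}$ for every stratum. Dualizing stratum by stratum, $i^*_{X_s}\omega_X\in D^{\leq -n}$, and since $\omega_X\in D^{\geq -n}$ always, the two conditions agree.

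For (2), I would compute stalks of the dualizing complex by Verdier duality:
$$\mathcal H^{-i}(\omega_X)_x\cong \rH^i_c(B_x,k)^\vee\cong \rH^i(X,X-\{x\},k)^\vee,$$
where $B_x$ is a small open (conic) neighbourhood of $x$. The Gorenstein condition says $\omega_X[-n]$ is a local system concentrated in one degree. Pointwise this means $\rH^i(X,X-\{x\};k)$ is $k$ in degree $n$ and zero otherwise. Conversely, if the local cohomology has this form at every point, then $\omega_X[-n]$ is a constructible sheaf with all stalks one-dimensional. I would then check that its restriction maps along strata are isomorphisms, which makes it a local system. For this I would use the standard fact that for $y$ near $x$ the map $\rH^n(X,X-B_x)\to \rH^n(X,X-\{y\})$ is an isomorphism when both sides are $k$ and $X$ is connected near $x$. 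This is the usual cohomology-manifold orientation sheaf argument, and I expect it to be the most delicate step. It also needs local connectedness of the regular part near $x$, and pointwise one-dimensionality gives this because $\rH^n(X,X-\{x\})$ counts local components of the top stratum. For complex stratifications, the orientation sheaf of the dense open part $U$ is trivial since $U$ is a complex manifold. Its unique extension as a local system, namely $\omega_X[-n]$, is then trivial wherever the local links are connected, so $\omega_X[-n]=k$, at least on each connected component.

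For (3), Gorenstein trivially implies Cohen--Macaulay. For the converse, assume $\omega_X[-n]=:\mathcal W$ is a sheaf. By duality $\mathbb D\mathcal W[n]... $ gives $\mathbb D\mathcal W=k_X[n]$, so $\mathcal W$ is itself MCM, and $k_X$ is MCM by (1). In the complex setting, Proposition~\ref{reflexive} together with Remark~\ref{analogue of normal} shows that $S_2$ sheaves are of the form $j_*^\circ\mathcal L$. Both $k_X$ and $\mathcal W$ are $S_2$, so $k_X=j^\circ_*k_U$ and $\mathcal W=j^\circ_*\mathcal W|_U=j^\circ_*k_U$, using that $\mathcal W|_U$ is the orientation sheaf of the complex manifold $U$, hence $k_U$. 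Therefore $\mathcal W\cong k_X$ is a local system, and $X$ is Gorenstein. The key input is that complex strata have real codimension at least $2$, so that Proposition~\ref{reflexive} applies to both sheaves.
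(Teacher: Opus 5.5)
\paragraph{Verdict.} Your proofs of (1) and (3), and of ``Gorenstein $\Rightarrow$ cohomology manifold'' in (2), are essentially the paper's. For the last one you should add why the local system $\omega_X[-n]$ has rank one: on an open stratum of dimension $m$, $\omega_X$ is the orientation sheaf shifted by $m$, which forces $m=n$ and rank one. The real problem is the converse in (2) for a general reasonably stratified \emph{real} space.

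\paragraph{The gap.} Your route to local constancy of $\mathcal W=\omega_X[-n]$ needs the union $U$ of open strata to be locally connected at $x$. You justify this by saying that $\rH^n(X,X-\{x\};k)$ counts the local components of the top stratum. That is false in the real setting. Take $X=\mathbb R$ with strata $\{0\}\cup\mathbb R^\times$. It is a manifold and $\rH^1(\mathbb R,\mathbb R-\{0\};k)=k$, yet $U$ has two local components at $0$, because codimension-one strata can separate. So your argument stalls in a case where the statement is true.

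In the complex case your argument does go through. There $X-U$ has real dimension at most $n-2$, so for a conic neighbourhood $B$ of $x$ you get $\rH^n_c(B\cap U)\cong\rH^n_c(B)\cong\rH^n(X,X-\{x\};k)=k$. The complex manifold $B\cap U$ is oriented, so this forces it to be connected. Then extension by zero $\rH^n_c(B_y)\to\rH^n_c(B)$ is an isomorphism for regular $y$, which dualizes to $\Gamma(B,\mathcal W)\xrightarrow{\sim}\mathcal W_y$. Note that the arrow you wrote between the relative groups $\rH^n(X,X-B_x)$ and $\rH^n(X,X-\{y\})$ actually goes the other way.

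\paragraph{The paper's route.} The paper's converse does not use the topology of $U$, and it works for any reasonable stratification. From $\mathbb D\mathcal W=k_X[n]$ you get $R\mathcal Hom(\mathcal W,\mathcal W)=\mathbb D\mathcal W[-n]=k_X$, so $\mathcal End(\mathcal W)\cong k_X$. Take a connected conic $B$ and a generator $s$ of $\Gamma(B,\mathcal W)=\mathcal W_x$, and suppose its support $C$ were a proper subset of $B$.
\begin{itemize}
\item Since the stalks are one-dimensional, $s$ trivializes $\mathcal W|_C$.
\item This gives a splitting $\mathcal W|_B\cong (C\hookrightarrow B)_*k_C\oplus (B-C\hookrightarrow B)_!(\mathcal W|_{B-C})$.
\item The projector of this splitting is a non-scalar idempotent in $\Gamma(B,\mathcal End(\mathcal W))=k$, which is a contradiction.
\end{itemize}
Hence $s$ is nowhere zero and $\mathcal W|_B\cong k_B$. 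This is the step you should substitute for the real case. Where your topological route does apply, it gives the extra information that $U$ is locally connected at every point of $X$.
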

\begin{proof}
For (1), it suffices to notice that $\mathbb Dk_X=R\mathcal Hom(k_X,\omega_X)=\omega_X$. For (2), if $X$ is Gorenstein, suppose $\omega_X=\mathcal L[\dim_\mathbb R X]$ and $\mathcal L$ is a local system. Since for open strata $U$, $\omega_X|_U=\omega_U=k_X[\dim_\mathbb R X]$, we see that $\mathcal L$ has rank one. Then, for any $x\in X$, 
$$
\rH^j(i_x^!k_X)=\rH^j(\mathbb D(i_x^*\omega_X))=\rH^j(\mathcal L_x^\vee[-\dim_\mathbb R X])=\rH^{j-\dim_\mathbb R X}(k)=\begin{cases}
    0, \quad j\neq \dim_\mathbb R X\\
    k, \quad j=\dim_\mathbb R X
\end{cases}
$$
Then that $V$ is $k$-cohomology manifold follows from the long exact sequence induced by $R\Gamma i_y^!k_V\rightarrow R\Gamma k_V\rightarrow R\Gamma (j_{V-\{y\}})_*k_{V-\{y\}}\xrightarrow{+1}$. Conversely, if $X$ is a $k$-cohomology manifold, one argue as above that $\rH^j(\mathbb D(i_x^*\omega_X))=\rH^j(i_x^!k_X)=\begin{cases}
    0, \quad j\neq \dim_\mathbb R X\\
    k, \quad j=\dim_\mathbb R X
\end{cases}$, so $\omega_X$ is concentrated in degree $-\dim_\mathbb R X$ with all stalks isomorphic to $k$. To prove $\mathcal L=\omega[-\dim_\mathbb RX]$ is a local system, it suffices to show that no section is supported on strata with dimension $<\dim_\mathbb RX$, so the section in the stalk generates every stalk in a neighborhood. Now, if there is a section $s\in\mathcal F(V)$ such that $s$ is supported on $Z$, then $s_x$ generates all stalks of $\mathcal L_V$ in $Z$, hence $i_Z^*\mathcal L_V=k_Z$. Then, one has a splitting
$${i_Z}_*k_Z\xrightarrow{s}\mathcal L_V\rightarrow {i_Z}_*i_Z^*\mathcal L_V={i_Z}_*k_Z
$$
so if we let $U=V-Z$, there is $\mathcal L_V={i_Z}_*k_Z\oplus {j_{U}}_!\mathcal L$, then
$$
\mathcal End(\mathcal L_V)=\mathcal End({i_Z}_*k_Z\oplus {j_{U}}_!\mathcal L_V)={i_Z}_*k_Z\oplus {j_U}_!\mathcal L_V
$$
However, since $\mathbb Dk_V=\omega_V$, we have $\mathbb D\omega_V=k_V$, so
$$\mathcal End(\mathcal L_V)=\mathcal H^0(R\mathcal Hom(\mathcal L_V,\mathcal L_V))=\mathcal H^0(\mathbb D\omega_V)=k_V
$$
which is non-splitting, a contradiction. So $X$ is Gorenstein. In this case, firstly observe $X$ is equidimensional, since over each open strata, the dualizing complex concentrates on the same degree. Now $\omega_X[-\dim_\mathbb RX]$ is also maximal Cohen--Macaulay since its Verdier dual is $k$, so by \ref{reflexive}, $\omega_X[-\dim_\mathbb R X]=j_*^\circ k_U=k_X$, the last equality is again by \ref{reflexive} and that $k_X$ is at least $S_2$ since it is maximal Cohen--Macaulay.

Finally, for (3), if $k_X$ is maximal Cohen--Macaulay, then $\mathcal G=\omega_X[-\dim_\mathbb RX]$ is a sheaf. While $\mathbb D(\omega_X[-\dim_\mathbb RX])=k_X[\dim_\mathbb R X]$, we see that $\mathcal G$ is also maximal Cohen--Macaulay. Denote $U$ be the union of open strata, by \ref{reflexive}, we see that $\mathcal G=\omega_X[-\dim_\mathbb RX]=j_*^\circ\omega_U[-\dim_\mathbb RU]=j_*^\circ k_U$. Now $k_X$ is maximal Cohen--Macaulay implies $j_*^\circ k_U=k_X$, so $\omega_X[-\dim_\mathbb RX]=\mathcal G=k_X$
\end{proof}

\begin{example}
Suppose $\mathfrak g$ is a complex semisimple Lie algebra. Let $\mathcal N$ be the nilpotent cone in $\mathfrak g$ and suppose $\widetilde{\mathcal N}$ is the Springer resolution. Let $d=\dim_\mathbb C\mathcal N$, according to Springer theory:
$$
\operatorname{IC}_{\mathcal N}(\mathbb Q)
\;\simeq\;
\left(R\pi_*\mathbb Q_{\widetilde{\mathcal N}}[d]\right)^W
\;\simeq\;
\mathbb Q_{\mathcal N}[d].
$$
So $\mathbb D \mathbb Q_\mathcal N[d]=\operatorname{IC}_{\mathcal N}(\mathbb Q^\vee)=\operatorname{IC}_{\mathcal N}(\mathbb Q)=\mathbb Q[d]$, so $\omega_\mathcal N=\mathbb D\mathbb Q_\mathcal N=\mathbb Q_\mathcal N[2d]$. Hence $\mathcal N$ is rational cohomology manifold, hence it is a geometric Gorenstein space in our sense.

However, this only holds for the full nilpotent cones: In \ref{sub nilpotent orbit}, we show that the constant sheaf on nilpotent orbit $\mathcal N_{3,1}$ is not maximal Cohen--Macaulay, so it is not geometrically Cohen--Macaulay. However, $\mathcal N_{3,1}$ is algebraically Gorenstein, this reveals the difference between geometric and algebraic Serre's conditions.
\end{example}

\begin{example}\label{Cn/G}
In \ref{Kleinian surfaces}, we classify the maximal Cohen--Macaulay constructible sheaves on Kleinian surfaces $\mathbb C^2/G$. Indeed, we generalize this to $X=\mathbb C^n/G$ where $G$ is a finite group in $GL(n,\mathbb C)$. Suppose $\Gamma$ is the subgroup of $G$ generated by pseudo-reflections, i.e., those elements $g$ fixing a hyperplane. Notice that quotient by such an element still gives you $\mathbb C^n$, so $X=\mathbb C^n/(G/\Gamma)$ (it is clear that $\Gamma$ is normal, since the conjugates of a pseudo-reflection are pseudo-reflections). Suppose $q$ is the quotient map $\mathbb C^n\rightarrow\mathbb C^n/(G/\Gamma)$, then $q^{-1}(X^{reg})=\mathbb C^n-\bigcup_{g\in G}(\mathbb C^n)^g$. Since $\bigcup_{g\in G}(\mathbb C^n)^g$ is of codimension $\geq 2$, we see that $q^{-1}(X^{reg})$ is simply connected, so it is a universal cover of $X^{reg}$, then $\mathrm{Loc}(X^{reg})\cong \mathrm{Rep}_k(G/\Gamma)$.

Let $k$ be a field such that $\mathrm{char} \,k\nmid |G/\Gamma|$. For any $G/\Gamma$-representation $V$, we define
$$\mathcal F_V=(q_*k_\mathbb C^n\otimes V)^{G/\Gamma}
$$
The fiber of $V$ at a point $x=q(z)$ is $V^{((G/\Gamma)_z}$, so the generic fiber is $V$. Therefore $\mathcal F_V|_{X^{reg}}$ is the sheaf with respect to the $\pi_1(X^{reg})$-representation $V$. Now since $\mathrm{char} \,k\nmid |G/\Gamma|$, taking $G$-invariant is exact, so
$$\mathbb D\mathcal F_V=\mathbb D(q_*k_\mathbb C^n\otimes V)^{G/\Gamma}=(\mathbb Dq_*k_\mathbb C^n\otimes V^\vee)^{G/\Gamma}=(q_*\mathbb Dk_\mathbb C^n\otimes V^\vee)^{G/\Gamma}=(q_*k_\mathbb C^n[2n]\otimes V^\vee)^{G/\Gamma}=\mathcal F_{V^\vee}[2n]
$$
So $\mathcal F_V$ is maximal Cohen--Macaulay in our sense. hence $\mathcal F=(j_{X^{reg}})_*(\mathcal F_V|_{X^{reg}})$ and we conclude that
$$\mathrm{MCM}(X)=\mathrm{Loc}(X^{reg})\cong \mathrm{Rep}_k(G/\Gamma)
$$

\end{example}
\subsection{Relation with intermediate extension and perversity}\label{geometric Serre's conditions and perversity}
In this subsection, suppose $X$ is a complex variety with Whitney stratification $X=\bigcup_s X_s$, and let $j:U\hookrightarrow X$ be the union of open strata. In algebraic theory, we have the relationships between $S_1$,$S_2$, and torsion-freeness, reflexivity, respectively, but we lack a good explanation for higher Serre's conditions $S_r \,(r\geq 3)$. In this subsection, we will reveal the relationship between the geometric Serre's condition and perversity.

Let $n_\mathbb R=\dim_\mathbb R X$. A first glimpse gives you a feeling that the Serre's condition $S_d$ is an analoguey of perverse condition removing a restriction at cohomology degree $d-n$, since one can write the condition for a constructible sheaf $\mathcal F$ in the following way:
$$\dim_\mathbb R\mathrm{Supp}\mathcal H^i(\mathbb D\mathcal F[d])\leq -i,\quad i\neq d-n_\mathbb R
$$
This illusion could be easily justified by realizing that the dimension here in the geometric Serre's conditions is real dimension, while the positive perverse structure is defined using complex dimension: ${}^pD_c^{\geq0}(X)=\{\mathcal F\in D^b_c(X)|\dim_\mathbb C\mathrm{Supp} \mathcal H^i(\mathbb D\mathcal F)\leq -i\}$. In order to correctly compare these two conditions, we normalize $\mathcal F$ by shifting it by $n_\mathbb C$ and rewrite our Serre's condition in complex dimension:
$$S_d:\ \dim_\mathbb C\mathrm{Supp}\mathcal H^i(\mathbb D\mathcal F[n_\mathbb C])\leq\frac{n_\mathbb C-i-d}{2},\quad i\neq  -n_\mathbb C
$$
Using this, we immediately get the following result.
\begin{proposition}
If $\mathcal F$ is $S_{n_\mathbb C}$, then $\mathcal F[n_\mathbb C]$ is perverse.
\end{proposition}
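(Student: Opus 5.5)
The plan is to check the two halves of perversity, ${}^pD_c^{\leq 0}$ and ${}^pD_c^{\geq 0}$, directly from the definitions. I will use the support conditions in complex dimension and the reformulation of $S_d$ displayed just before the statement. Throughout, $n_\mathbb R=2n_\mathbb C$. Every support appearing is a closed union of complex strata, so its real dimension is even and equals twice its complex dimension. This is what lets me divide the real-dimensional bounds by $2$.

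\textbf{The condition ${}^pD_c^{\leq 0}$.} Since $\mathcal F$ is a sheaf, $\mathcal F[n_\mathbb C]$ has a single cohomology sheaf, $\mathcal H^{-n_\mathbb C}(\mathcal F[n_\mathbb C])=\mathcal F$. The requirement is $\dim_\mathbb C\mathrm{Supp}\,\mathcal H^{i}(\mathcal F[n_\mathbb C])\leq -i$ for all $i$. The only case to check is $i=-n_\mathbb C$, where it reads $\dim_\mathbb C\mathrm{Supp}\,\mathcal F\leq n_\mathbb C$. This holds because $\dim_\mathbb C X=n_\mathbb C$.

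\textbf{The condition ${}^pD_c^{\geq 0}$.} Here we need $\dim_\mathbb C\mathrm{Supp}\,\mathcal H^i(\mathbb D(\mathcal F[n_\mathbb C]))\leq -i$ for every $i$. Note that $\mathbb D(\mathcal F[n_\mathbb C])=(\mathbb D\mathcal F)[-n_\mathbb C]$, so $\mathcal H^i(\mathbb D(\mathcal F[n_\mathbb C]))=\mathcal H^{i-n_\mathbb C}(\mathbb D\mathcal F)$. Set $j=i-n_\mathbb C$ and apply Definition~\ref{Serre's condition} with $d=n_\mathbb C$. For $j\neq -n_\mathbb R$, i.e. $i\neq -n_\mathbb C$, we get
$$2\dim_\mathbb C\mathrm{Supp}\,\mathcal H^{j}(\mathbb D\mathcal F)\leq -j-n_\mathbb C=-i,$$
so $\dim_\mathbb C\mathrm{Supp}\leq -i/2$. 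I then split into three cases.
\begin{itemize}
\item If $i>0$, then $-i/2<0$, so the support is empty and the sheaf vanishes. In particular the bound $\leq -i$ holds.
\item If $i\leq 0$ and $i\neq -n_\mathbb C$, then $-i/2\leq -i$, so the required bound holds.
\item If $i=-n_\mathbb C$, the required bound is $\dim_\mathbb C\mathrm{Supp}\leq n_\mathbb C$, which is automatic.
\end{itemize}
For $i<-n_\mathbb C$ the cohomology vanishes anyway, since $\mathbb D\mathcal F\in D^{\geq -n_\mathbb R}$ by $(V_3)$.

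\textbf{Main point of care.} The only real subtlety is the bookkeeping of shifts and the parity argument that turns real dimensions into complex ones. One must track that the exceptional degree $j=-n_\mathbb R$ of the Serre condition corresponds exactly to the perverse degree $i=-n_\mathbb C$. That is precisely the degree in which the perverse bound is automatic, so no information is lost there. I expect no genuine obstacle beyond this bookkeeping. Indeed, the argument shows slightly more than perversity: the bound is $-i/2$, which is stronger than $-i$.
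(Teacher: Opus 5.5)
Your proof is correct and follows the paper's argument. You rewrite $S_{n_\mathbb C}$ in complex dimensions as $\dim_\mathbb C\mathrm{Supp}\,\mathcal H^i(\mathbb D(\mathcal F[n_\mathbb C]))\leq -i/2$ for $i\neq -n_\mathbb C$, then compare this with the perverse bound $-i$ in the three cases $i>0$, $i\leq 0$ with $i\neq -n_\mathbb C$, and $i=-n_\mathbb C$; you also spell out the trivial ${}^pD_c^{\leq 0}$ half and the real-versus-complex dimension point, which the paper leaves implicit.
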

\begin{proof}
$S_{n_\mathbb C}$ means $\dim_\mathbb C\mathrm{Supp}\mathcal H^i(\mathbb D\mathcal F[n_\mathbb C])\leq-\frac{i}{2},\quad i\neq  -n_\mathbb C$. For $i\leq0,\  i\neq -n_\mathbb C$, there is $-\frac{i}{2}\leq -i$, so the perverse conditions are satisfied. The perverse condition for $i=-n_\mathbb C$ trivially holds. For  $i>0$, both $S_{n_\mathbb C}$ and the perverse conditions are the same as $\mathcal H^i(\mathbb D\mathcal F[n_C])=0$, so when $\mathcal F$ is $S_{n_\mathbb C}$, $\mathcal F[n_\mathbb C]$ is perverse.
\end{proof}

Next, we want to find more connections between Serre's conditions and perversity. We need the following two characterizations of perversity and intermediate extension.
\begin{lemma}\label{perverse lemma}[\cite{Achar2021PerverseSheaves} Lemma 3.1.5]
Let $X$ be a complex variety, and let $(X_s)_{s\in\mathscr S}$ be a stratification. For each $s\in\mathscr S$, let $j_s:X_s\hookrightarrow X$ be the inclusion map, and let $\mathcal F\in D_c^b(X,k)$.
\begin{enumerate}
    \item Suppose $\mathcal F$ is constructible with respect to $\mathscr S$. We have $\mathcal F\in {}^pD_c^b(X,k)^{\leq 0}$ if and only if $j_s^*\mathcal F\in D_{\mathrm{locf}}^b(X_s,k)^{\leq -\dim X_s}$ for all $s\in\mathscr S$.

    \item Suppose $\mathcal F$ is constructible with respect to $\mathscr S$. We have $\mathcal F\in {}^pD_c^b(X,k)^{\geq 0}$ if and only if $j_s^!\mathcal F\in D_{\mathrm{locf}}^b(X_s,k)^{\geq -\dim X_s}$ for all $s\in\mathscr S$.
\end{enumerate}
\end{lemma}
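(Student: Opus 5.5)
The plan is to unwind the definition of the perverse $t$-structure stratum by stratum, as the paper recalls it: ${}^pD_c^{\leq 0}(X)$ consists of those $\mathcal F$ with $\dim_\mathbb C\mathrm{Supp}\,\mathcal H^i(\mathcal F)\leq -i$, and ${}^pD_c^{\geq 0}(X)$ of those with $\dim_\mathbb C\mathrm{Supp}\,\mathcal H^i(\mathbb D\mathcal F)\leq -i$. Part (1) is a direct translation. Part (2) follows from part (1) applied to $\mathbb D\mathcal F$, together with Verdier duality on each stratum.

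\textbf{Part (1).} Since $\mathcal F$ is constructible with respect to $\mathscr S$, each $\mathcal H^i(\mathcal F)$ is locally constant along every stratum. Hence $\mathrm{Supp}\,\mathcal H^i(\mathcal F)$ is a union of strata: $X_s$ is contained in it exactly when $j_s^*\mathcal H^i(\mathcal F)\neq 0$, and is disjoint from it otherwise. The support condition therefore says that $j_s^*\mathcal H^i(\mathcal F)=0$ whenever $\dim X_s>-i$, which is the same as $i>-\dim X_s$. Because $j_s^*$ is $t$-exact for the standard $t$-structure, $j_s^*\mathcal H^i(\mathcal F)=\mathcal H^i(j_s^*\mathcal F)$. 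So the condition is exactly $j_s^*\mathcal F\in D^b_{\mathrm{locf}}(X_s,k)^{\leq -\dim X_s}$ for every $s$. Both directions are immediate, because $\dim\mathrm{Supp}$ is the maximum of $\dim X_s$ over the strata it contains.

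\textbf{Part (2).} First, $\mathbb D\mathcal F$ is again constructible with respect to $\mathscr S$; this is where the Whitney (``reasonable'') hypothesis is used. Applying part (1) to $\mathbb D\mathcal F$ gives
\[
\mathcal F\in {}^pD_c^{\geq 0}(X,k)\iff j_s^*\mathbb D\mathcal F\in D^b_{\mathrm{locf}}(X_s,k)^{\leq -\dim X_s}\text{ for all } s.
\]
Next, Verdier duality gives $j_s^*\mathbb D_X\mathcal F\simeq\mathbb D_{X_s}j_s^!\mathcal F$. Since $X_s$ is a complex manifold of complex dimension $d_s$, its dualizing complex is the orientation sheaf $k_{X_s}[2d_s]$. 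For $\mathcal G\in D^b_{\mathrm{locf}}(X_s,k)$ we then have $\mathbb D_{X_s}\mathcal G=R\mathcal Hom(\mathcal G,k)[2d_s]$. Over a field $k$, dualizing finite-rank local systems is exact, so
\[
\mathcal H^i(\mathbb D_{X_s}\mathcal G)\simeq \mathcal H^{-i-2d_s}(\mathcal G)^\vee .
\]
Consequently $\mathbb D_{X_s}\mathcal G\in D^{\leq -d_s}$ if and only if $\mathcal H^m(\mathcal G)=0$ for all $m<-d_s$, that is, $\mathcal G\in D^{\geq -d_s}$. Taking $\mathcal G=j_s^!\mathcal F$, which lies in $D^b_{\mathrm{locf}}$ because $\mathcal F$ is constructible with respect to $\mathscr S$, yields (2).

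The main obstacle is this last duality step on a stratum. It needs $\mathbb D_{X_s}$ to exchange $D^{\leq -d_s}$ with $D^{\geq -d_s}$ on locally free complexes. Over a field this is immediate from exactness of the dual. Over a more general coefficient ring $k$ (finite global dimension), $R\mathcal Hom(-,k)$ is not exact, so this argument fails. I would avoid that case, since the paper works over fields.
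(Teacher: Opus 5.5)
Your proof is correct. The paper gives no proof of this lemma, only the citation to Achar's Lemma~3.1.5, and your route is the usual proof of that result over a field: for (1) you read off $\dim\mathrm{Supp}\,\mathcal H^i(\mathcal F)$ stratum by stratum, and for (2) you apply (1) to $\mathbb D\mathcal F$ via $j_s^*\mathbb D_X\cong\mathbb D_{X_s}j_s^!$ and exact duality of finite-rank local systems on the smooth strata.

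The restrictions you impose are appropriate. Field coefficients are the setting of the paper's description of ${}^pD^{\geq 0}$ via $\dim\mathrm{Supp}\,\mathcal H^i(\mathbb D\mathcal F)$. A good (e.g.\ Whitney) stratification is genuinely necessary for part (2) as transcribed, since otherwise an $\mathscr S$-constructible perverse $\mathcal F$ can have $j_s^!\mathcal F\notin D^b_{\mathrm{locf}}(X_s,k)$. For example, take $\mathcal F=k_W[2]$ on the Whitney umbrella $W=\{x^2=zy^2\}$, stratified by the $z$-axis $Z$ and $W\setminus Z$; then $\mathcal H^1(j_Z^!k_W)$ has rank $1$ off the origin and vanishes at it.
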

\begin{lemma}\label{intermediate extension lemma}[\cite{Achar2021PerverseSheaves} Lemma 3.3.4]
Let $\mathcal F$ be a perverse sheaf on $X$. Let $h:Y\hookrightarrow X$ be the inclusion map of a locally closed subvariety, and let $\mathcal F'$ be a perverse sheaf on $Y$. Let $Z=\overline{Y}\setminus Y$, and let $i:Z\hookrightarrow X$ be the inclusion map. The following conditions are equivalent:
\begin{enumerate}
    \item $\mathcal F\cong h_{!*}\mathcal F'$.
    \item The support of $\mathcal F$ is $\overline{Y}$. Moreover, $\mathcal F|_Y\cong\mathcal F'$, and we have
    $$
    i^!\mathcal F\in {}^pD_c^b(Z,k)^{\geq 1}
    \qquad\text{and}\qquad
    i^*\mathcal F\in {}^pD_c^b(Z,k)^{\leq -1}.
    $$
\end{enumerate}
\end{lemma}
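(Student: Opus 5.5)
We first reduce to the case where $Y$ is open in its closure, and then prove the open case using the description of $j_{!*}$ as an image together with the localization triangles.

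\textbf{Reduction to $Y$ open in $\overline{Y}$.} Factor $h=\bar\imath\circ j$, where $j:Y\hookrightarrow\overline{Y}$ is the open inclusion and $\bar\imath:\overline{Y}\hookrightarrow X$ is the closed inclusion. Then $h_{!*}=\bar\imath_*\,j_{!*}$. Since $\bar\imath_*$ is $t$-exact for the perverse $t$-structure and fully faithful, condition (1) forces $\operatorname{Supp}\mathcal F\subseteq\overline{Y}$. Conversely, under (2) the perverse sheaf $\mathcal F$ is supported on $\overline{Y}$, so $\mathcal F\cong\bar\imath_*\mathcal G$ with $\mathcal G={}\bar\imath^*\mathcal F$ perverse on $\overline{Y}$. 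Because $Z$ is closed in $\overline{Y}$, the functors $i^*$ and $i^!$ applied to $\mathcal F$ compute the same objects as the restrictions from $\overline{Y}$ to $Z$ applied to $\mathcal G$. So it suffices to prove the following. For $\mathcal G$ perverse on $\overline{Y}$ with $j^*\mathcal G\cong\mathcal F'$, we have $\mathcal G\cong j_{!*}\mathcal F'$ if and only if $i^!\mathcal G\in{}^pD^{\geq1}$ and $i^*\mathcal G\in{}^pD^{\leq -1}$. (The condition "support equals $\overline{Y}$" is then automatic whenever $\mathcal F'\neq0$.)

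\textbf{Key step: the conditions say $\mathcal G$ has no subobjects or quotients supported on $Z$.} Consider the localization triangles
$$i_*i^!\mathcal G\to\mathcal G\to j_*j^*\mathcal G\xrightarrow{+1},\qquad j_!j^*\mathcal G\to\mathcal G\to i_*i^*\mathcal G\xrightarrow{+1}.$$
Since $i^!$ is left $t$-exact and $\mathcal G$ is perverse, we have $i^!\mathcal G\in{}^pD^{\geq0}$. Applying ${}^p\rH^0$ to the first triangle gives the exact sequence
$$0\to i_*{}^p\rH^0(i^!\mathcal G)\to\mathcal G\to{}^p\rH^0(j_*\mathcal F').$$
Thus $i_*{}^p\rH^0(i^!\mathcal G)$ is the largest perverse subobject of $\mathcal G$ supported on $Z$, by adjunction $\operatorname{Hom}(i_*\mathcal P,\mathcal G)=\operatorname{Hom}(\mathcal P,i^!\mathcal G)$. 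Hence $i^!\mathcal G\in{}^pD^{\geq1}$ holds if and only if $\mathcal G\to{}^p\rH^0(j_*\mathcal F')$ is injective. Dually, using that $i^*$ is right $t$-exact, $i^*\mathcal G\in{}^pD^{\leq-1}$ holds if and only if ${}^p\rH^0(j_!\mathcal F')\to\mathcal G$ is surjective.

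\textbf{Conclusion.} Recall that $j_{!*}\mathcal F'$ is defined as the image of the natural map ${}^p\rH^0(j_!\mathcal F')\to{}^p\rH^0(j_*\mathcal F')$. By adjunction, this map factors as ${}^p\rH^0(j_!\mathcal F')\to\mathcal G\to{}^p\rH^0(j_*\mathcal F')$, because both induced maps restrict to the identity on $Y$.

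If both conditions hold, the first map is surjective and the second is injective, so $\mathcal G$ is exactly the image, i.e. $\mathcal G\cong j_{!*}\mathcal F'$.

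Conversely, take $\mathcal G=j_{!*}\mathcal F'$. It is a subobject of ${}^p\rH^0(j_*\mathcal F')$. Any subobject $\mathcal P$ of ${}^p\rH^0(j_*\mathcal F')$ supported on $Z$ vanishes, since
$$\operatorname{Hom}(i_*\mathcal P,{}^p\rH^0 j_*\mathcal F')=\operatorname{Hom}(i_*\mathcal P,j_*\mathcal F')=0.$$
Here the first equality holds because $j_*$ is left $t$-exact, and the second because $j^*i_*=0$. So $j_{!*}\mathcal F'$ has no subobject supported on $Z$, and dually it has no quotient supported on $Z$. By the key step, this gives both vanishing conditions.

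\textbf{Main obstacle.} The step needing the most care is the identification of ${}^p\rH^0$ of the localization triangle. One must justify that ${}^p\rH^{-1}(j_*\mathcal F')=0$, which holds because $j_*$ is left $t$-exact, so that the displayed sequence really starts with $0$. The dual statement requires the analogous fact ${}^p\rH^{1}(j_!\mathcal F')=0$, using that $j_!$ is right $t$-exact.
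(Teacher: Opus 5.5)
Your argument is correct and is the standard proof of this lemma; the paper gives no proof of its own and only cites Achar. You reduce to $Y$ open in $\overline Y$ via $h_{!*}=\bar\imath_*j_{!*}$. The two localization triangles then show that $i^!\mathcal G\in{}^pD^{\geq 1}$ is equivalent to injectivity of $\mathcal G\to{}^p\rH^0(j_*\mathcal F')$, and that $i^*\mathcal G\in{}^pD^{\leq -1}$ is equivalent to surjectivity of ${}^p\rH^0(j_!\mathcal F')\to\mathcal G$. Together these identify $\mathcal G$ with the image defining $j_{!*}\mathcal F'$. One small omission: for (1)$\Rightarrow$(2) you should record that $\mathcal F|_Y\cong\mathcal F'$. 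This holds because $j^*$ is $t$-exact, so $j^*j_{!*}\mathcal F'$ is the image of $\mathrm{id}_{\mathcal F'}$.

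The one false statement in your proposal is the parenthetical claim that ``support equals $\overline Y$'' is automatic once $\mathcal F'\neq 0$.

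\emph{Counterexample.} Assume $\dim Y>0$ and let $\mathcal F'=k_y$ be the skyscraper at a point $y\in Y$. Since $\{y\}$ is closed in $\overline Y$, we have $j_!k_y=j_*k_y=k_y$. Hence $h_{!*}\mathcal F'=k_y$ satisfies (1), but its support is $\{y\}\subsetneq\overline Y$.

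\emph{General picture.} $\operatorname{Supp}h_{!*}\mathcal F'=\overline Y$ holds exactly when $\operatorname{Supp}\mathcal F'=Y$. Indeed, this support is closed, lies in $\overline Y$, and meets $Y$ in $\operatorname{Supp}\mathcal F'$.

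So (1)$\Rightarrow$(2) is false as the lemma is printed here. The correct condition is $\operatorname{Supp}\mathcal F\subseteq\overline Y$, and that is exactly what your reduction uses and proves in both directions. You should say so explicitly rather than wave the discrepancy away.
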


With these lemmas and \ref{Generalized duality}, we can tell the following important relationship between Serre's conditions and perversity.
\begin{theorem}\label{Serre's condition and perversity}
Let $X(r)=\bigcup_{\mathrm{codim_\mathbb C}X_s\leq r}X_s$ to be the union of strata of complex codimension $\leq r$. Let $j_r:U\hookrightarrow X(r)$ be the open inclusion of the union of open strata into $X(r)$. Then if $\mathcal F$ is $S_r$, we have
\begin{enumerate}
    \item [(1)] $\mathcal F[n_\mathbb C]|_{X(r)}$ is perverse on $X(r)$.
    \item [(2)] $\mathcal F[n_\mathbb C]$ is the intermediate extension of $\mathcal F[n_\mathbb C]|_U$ on $X(r-1)$, i.e.,
    $$
    \mathcal F[n_\mathbb C]=(j_{r-1})_{!*}\mathcal F|_U[n_\mathbb C]=\mathrm{IC}(X(r),\mathcal F|_U)
    $$
\end{enumerate}
\end{theorem}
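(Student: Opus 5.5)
The plan is to translate everything into stratumwise conditions. Proposition~\ref{Generalized duality} does this for $S_r$, and Lemma~\ref{perverse lemma} and Lemma~\ref{intermediate extension lemma} do it for perversity and intermediate extension. The proof then reduces to comparing integer inequalities stratum by stratum. Fix a stratum $X_s$ of complex codimension $c$, so that $\mathrm{codim}_\mathbb R X_s=2c$ and $\dim_\mathbb C X_s=n_\mathbb C-c$. By Proposition~\ref{Generalized duality}, the hypothesis $S_r$ says exactly that
$$i_{X_s}^!\mathcal F\in D^{\geq \min\{r,2c\}}_{\mathrm{Loc}}(X_s)\quad\text{for every stratum } X_s .$$
Since $X(r)$ is open in $X$ (it is a union of strata closed under generization), restriction to $X(r)$ commutes with both $i_{X_s}^*$ and $i_{X_s}^!$ for the strata $X_s\subset X(r)$. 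So it suffices to check the stratumwise conditions for the strata with $c\leq r$.

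For (1), I would apply Lemma~\ref{perverse lemma} on $X(r)$. The condition ${}^pD^{\leq 0}$ asks that $i_{X_s}^*\mathcal F[n_\mathbb C]\in D^{\leq -(n_\mathbb C-c)}$, i.e. $i_{X_s}^*\mathcal F\in D^{\leq c}$. This is automatic because $\mathcal F$ is a sheaf and $c\geq 0$. The condition ${}^pD^{\geq 0}$ asks that $i_{X_s}^!\mathcal F\in D^{\geq c}$. This follows from $S_r$, since $c\leq r$ and $c\leq 2c$ give $\min\{r,2c\}\geq c$.

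For (2), I would apply Lemma~\ref{intermediate extension lemma} on $X(r)$ with $Y=U$ and $Z=X(r)\setminus U$. The closed piece $Z$ is stratified by the strata with $1\leq c\leq r$, and for such a stratum $i_{X_s}^!=i_{X_s\to Z}^!\,i_Z^!$ and similarly for $*$. So Lemma~\ref{perverse lemma}, applied on $Z$ with the shifted $t$-structure, turns the two conditions into stratumwise ones. The condition $i_Z^*\mathcal F[n_\mathbb C]\in{}^pD^{\leq -1}(Z)$ becomes $i_{X_s}^*\mathcal F\in D^{\leq c-1}$, which holds because $c\geq 1$ and $\mathcal F$ is a sheaf. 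The condition $i_Z^!\mathcal F[n_\mathbb C]\in{}^pD^{\geq 1}(Z)$ becomes $i_{X_s}^!\mathcal F\in D^{\geq c+1}$. This follows from $\min\{r,2c\}\geq c+1$, which holds exactly when $1\leq c\leq r-1$.

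The step I expect to be the main obstacle is the boundary case $c=r$. There $S_r$ only yields $D^{\geq r}$, not $D^{\geq r+1}$, so the costalk condition can fail on codimension-$r$ strata. This is precisely why the intermediate-extension statement has to be made on $X(r-1)$ rather than $X(r)$. I would therefore run the argument for (2) on $X(r-1)$, where every stratum of $Z$ has $1\leq c\leq r-1$, and read the final identification with $\mathrm{IC}$ accordingly. I would also keep track of how the two index conventions in the statement match up.

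The remaining hypothesis of Lemma~\ref{intermediate extension lemma} is that $\mathrm{Supp}\,\mathcal F=\overline{U}$ (on the open subset in question). For this I would use that $S_r$ with $r\geq 1$ implies $S_1$, hence torsion-freeness by Proposition~\ref{torsion free and S1}. Then $\mathcal F\hookrightarrow j_*^\circ j^*\mathcal F$, so no section of $\mathcal F$ is supported on $Z$. If $\mathcal F|_U$ is supported on only some components of $U$, I would replace $U$ by the union of the components where $\mathcal F|_U\neq 0$; the same argument then applies.
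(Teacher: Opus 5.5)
Your proposal is correct and follows essentially the paper's route: the generalized duality proposition gives the stratumwise bound $i_{X_s}^!\mathcal F\in D^{\geq\min\{r,2c\}}$, and the perversity and intermediate-extension lemmas reduce everything to the inequalities $\min\{r,2c\}\geq c$ for $c\leq r$ and $\min\{r,2c\}\geq c+1$ for $1\leq c\leq r-1$. You are more careful than the paper, since you also check the stalk conditions, the openness of $X(r)$ and the support hypothesis, and your reading of (2) on $X(r-1)$ is the right one: the paper's proof likewise only uses strata of codimension $\leq r-1$, and the literal $\mathrm{IC}(X(r),\mathcal F|_U)$ fails in general (e.g.\ $j_!k_{\mathbb C^\times}$ on $\mathbb C$ is $S_1$, but $j_!k_{\mathbb C^\times}[1]\neq\mathrm{IC}(\mathbb C,k)$).
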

\begin{proof}
Suppose $\mathcal F$ is $S_r$, by proposition \ref{Generalized duality}, we have $i_{X_s}^!\mathcal F \in D^{\geq\min\{r,\mathrm{codim}_\mathbb R X_s\}}_{\mathrm{Loc}}(X_s)$, so 
$$i_{X_s}^!\mathcal F[n_\mathbb C] \in D^{\geq\min\{r-n_\mathbb C,\,2\mathrm{codim}_\mathbb C X_s-n_\mathbb C\}}_{\mathrm{Loc}}(X_s)
$$
Now for non-open stratum $X_s$, $2\mathrm{codim}_\mathbb C X_s-n_\mathbb C=\mathrm{codim}_\mathbb C X_s-\dim_\mathbb C X_s\geq-\dim_\mathbb C X_s+1$. If $\mathrm{codim_\mathbb C}\leq r$, we have $r-n_\mathbb C\geq -\dim_\mathbb C X_s$, so $i_{X_s}^!\mathcal F[n_\mathbb C] \in D^{\geq -\dim X_s}$. Then by lemma \ref{perverse lemma}, This implies $\mathcal F$ is perverse on $X(r)$. Similarly, if  $\mathrm{codim_\mathbb C}\leq r-1$, we have $r-n_\mathbb C \geq -\dim_\mathbb C X_s+1$, so $i_{X_s}^!\mathcal F[n_\mathbb C] \in D^{\geq -\dim X_s+1}$. Then combined with \ref{perverse lemma} and \ref{intermediate extension lemma}, we see that $\mathcal F$ is the intermediate extension of $\mathcal F|_U$. 
\end{proof}

This theorem is a very good explanation of $S_r$ conditions for $r\leq \dim_\mathbb C X+1$. Two immediate interesting corollaries appear here:
\begin{corollary}\label{middle Serre's condition and perverse}
If $\mathcal F$ is $S_{\dim_\mathbb C X}$, then $\mathcal F[n_\mathbb C]$ is perverse.
\end{corollary}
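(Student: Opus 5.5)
The plan is to read the corollary off Theorem~\ref{Serre's condition and perversity} by taking $r=n_\mathbb C=\dim_\mathbb C X$. With that choice the space $X(r)$ is all of $X$.

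The first step is to check this identification. Every stratum $X_s$ of the Whitney stratification is a complex stratum with $0\le \dim_\mathbb C X_s\le n_\mathbb C$. Hence $\mathrm{codim}_\mathbb C X_s\le n_\mathbb C=r$ for every $s$. It follows that $X(n_\mathbb C)=\bigcup_{\mathrm{codim}_\mathbb C X_s\le n_\mathbb C}X_s=X$, and the restriction $\mathcal F[n_\mathbb C]|_{X(r)}$ is just $\mathcal F[n_\mathbb C]$.

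The second step is to apply part~(1) of Theorem~\ref{Serre's condition and perversity} with $r=n_\mathbb C$. By hypothesis $\mathcal F$ is $S_{n_\mathbb C}$. The theorem then says $\mathcal F[n_\mathbb C]|_{X(n_\mathbb C)}$ is perverse on $X(n_\mathbb C)=X$, which is exactly the claim.

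As a cross-check, the same conclusion follows from the unnumbered proposition just before Lemma~\ref{perverse lemma}, which directly compares the $S_{n_\mathbb C}$ inequality with the definition of ${}^pD^{\ge0}_c$. That comparison only gives ${}^pD^{\ge0}$, though. To get ${}^pD^{\le0}$ one also needs that $\mathcal F$ is a sheaf and $\dim_\mathbb C X=n_\mathbb C$, so that $\mathcal F[n_\mathbb C]\in{}^pD^{\le0}$ by Lemma~\ref{perverse lemma}(1). Concretely, $j_s^*\mathcal F[n_\mathbb C]$ is concentrated in degree $-n_\mathbb C\le-\dim_\mathbb C X_s$.

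There is no real obstacle. The only point needing care is the identification $X(n_\mathbb C)=X$, which uses that the ambient space is complex equidimensional of dimension $n_\mathbb C$ with all strata complex.
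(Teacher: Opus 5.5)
Your proposal is correct and follows the paper's route: the paper states this corollary as the immediate case $r=n_\mathbb C$ of Theorem~\ref{Serre's condition and perversity}, where every stratum has complex codimension at most $n_\mathbb C$, so $X(n_\mathbb C)=X$. Your side remark is also accurate: the paper's unnumbered proposition and the proof of the theorem only check the ${}^pD^{\ge0}$ half, and the ${}^pD^{\le0}$ half holds automatically because $\mathcal F$ is a sheaf and $\dim_\mathbb C X_s\le n_\mathbb C$.
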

\begin{corollary}\label{middle Serre's condition and intersection complex}
If $\mathcal F$ is $S_{\dim_\mathbb C X+1}$, then $\mathcal F[n_\mathbb C]=\mathrm{IC}(X,\mathcal F|_U[n_\mathbb C])$.
\end{corollary}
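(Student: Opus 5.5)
This follows from Theorem \ref{Serre's condition and perversity} specialized to $r=n_\mathbb C+1$, where $n_\mathbb C=\dim_\mathbb C X$. Every stratum has complex codimension at most $n_\mathbb C$. Hence $X(n_\mathbb C)=X(n_\mathbb C+1)=X$, and the hypothesis $\mathrm{codim}_\mathbb C X_s\leq r-1$ holds for every stratum. The plan is to redo the final step of that proof directly on $X$, rather than quoting part (2) with its index conventions.

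\textbf{Stratum bounds.} Assume $\mathcal F$ is $S_{n_\mathbb C+1}$. By Proposition \ref{Generalized duality},
$$i_{X_s}^!\mathcal F\in D^{\geq\min\{n_\mathbb C+1,\,2\mathrm{codim}_\mathbb C X_s\}}(X_s).$$
After shifting by $n_\mathbb C$, the lower bound is $\min\{1,\ \mathrm{codim}_\mathbb C X_s-\dim_\mathbb C X_s\}$. For a non-open stratum, $\mathrm{codim}_\mathbb C X_s\geq 1$, so both entries are at least $-\dim_\mathbb C X_s+1$. Therefore
$$i_{X_s}^!\mathcal F[n_\mathbb C]\in D^{\geq -\dim_\mathbb C X_s+1}(X_s)\quad\text{for every non-open stratum.}$$
The dual bounds come directly from $\mathcal F$ being a sheaf: $i_{X_s}^*\mathcal F[n_\mathbb C]\in D^{\leq -n_\mathbb C}$, and $-n_\mathbb C\leq -\dim_\mathbb C X_s-1$ when $X_s$ is not open. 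On open strata, $\mathcal F|_U[n_\mathbb C]$ is a shifted local system, so it is perverse. By Corollary \ref{middle Serre's condition and perverse}, or by Lemma \ref{perverse lemma} using the bounds above, $\mathcal F[n_\mathbb C]$ is perverse on $X$.

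\textbf{Checking Lemma \ref{intermediate extension lemma}.} Take $h=j$, $Y=U$ and $Z=X\setminus U$. Since $X$ is equidimensional, $\overline U=X$. I would first check the support condition: $\mathrm{Supp}\,\mathcal F[n_\mathbb C]=X$. If $\mathcal F|_U=0$, both sides of the claimed identity vanish: by the $S_1$ part of the hypothesis and Proposition \ref{torsion free and S1}, $\mathcal F$ injects into $j_*^\circ j^*\mathcal F=0$, so $\mathcal F=0$. Otherwise the support is $X$, up to the harmless case of components of $U$ where $\mathcal F|_U$ vanishes, which one handles by restricting $U$.

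Next come the two conditions on $Z$. For $i^!$: glue the stratum bounds $i_{X_s}^!=i_{X_s\subset Z}^!\,i^!$ for strata in $Z$. Lemma \ref{perverse lemma}, applied on $Z$ after the shift by $1$, then gives $i^!\mathcal F[n_\mathbb C]\in{}^pD^{\geq1}(Z)$. For $i^*$: the stalk bounds $i_{X_s}^*\mathcal F[n_\mathbb C]\in D^{\leq -\dim X_s-1}$ give $i^*\mathcal F[n_\mathbb C]\in{}^pD^{\leq -1}(Z)$ by part (1) of the same lemma. Lemma \ref{intermediate extension lemma} then yields $\mathcal F[n_\mathbb C]\cong j_{!*}(\mathcal F|_U[n_\mathbb C])=\mathrm{IC}(X,\mathcal F|_U)$.

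The only real subtlety is the bookkeeping in the $i^!$ step. The inequality $\mathrm{codim}-\dim\geq -\dim+1$ is exactly where "non-open" is used, and the cap at $n_\mathbb C+1$ rather than $n_\mathbb C$ is what supplies the extra $+1$ for small-codimension strata. That extra $+1$ is precisely why $S_{\dim_\mathbb C X+1}$, and not $S_{\dim_\mathbb C X}$, is the hypothesis needed.
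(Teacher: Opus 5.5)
Your proof is correct and follows the paper's route: the corollary is just Theorem~\ref{Serre's condition and perversity} at $r=n_\mathbb C+1$, whose proof is your combination of Proposition~\ref{Generalized duality} with Lemmas~\ref{perverse lemma} and~\ref{intermediate extension lemma}; you additionally spell out the $i^*$ bound and the support condition, which the paper leaves implicit. One small slip in your closing remark: the cap $n_\mathbb C+1$ (rather than $n_\mathbb C$) matters only for the strata of maximal codimension, i.e.\ the point strata, not for small-codimension strata.
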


For maximal Cohen--Macaulay condition, we have the following theorem.
\begin{theorem}\label{MCM and intersection complex}
Suppose $\mathcal F$ is a constructible sheaf and $\mathcal F|_U=\mathcal L$ is a local system on $U$. Then $\mathcal F$ is maximal Cohen--Macaulay if and only if $\mathcal F=j^\circ_*\mathcal L$ and
$$j^\circ_*\mathcal L[n_\mathbb C]=\mathrm{IC}(X,\mathcal L),\quad j^\circ_*\mathcal L^\vee[n_\mathbb C]=\mathrm{IC}(X,\mathcal L^\vee)
$$
\end{theorem}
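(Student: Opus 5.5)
The plan is to reduce both directions to the characterization of MCM as ``$\mathbb D\mathcal F=\mathcal G[n_\mathbb R]$ for a constructible sheaf $\mathcal G$,'' where $n_\mathbb R=2n_\mathbb C$. The main link is Verdier duality for intersection complexes, $\mathbb D\,\mathrm{IC}(X,\mathcal L)\cong\mathrm{IC}(X,\mathcal L^\vee)$, where $\mathcal L^\vee$ denotes the dual local system. This isomorphism holds because on $U$ we have $\mathbb D(\mathcal L[n_\mathbb C])=\mathcal L^\vee[n_\mathbb C]$, and $j_{!*}$ commutes with $\mathbb D$.

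\emph{($\Rightarrow$).} Assume $\mathcal F$ is MCM, i.e.\ $S_{n_\mathbb R}$.
\begin{enumerate}
\item Since $n_\mathbb R\ge 2$, $\mathcal F$ is in particular $S_2$. Proposition~\ref{Generalized duality} and the localization-triangle argument from the proof of Proposition~\ref{reflexive} then give $\mathcal F\cong j^\circ_*j^*\mathcal F=j^\circ_*\mathcal L$. That step of the proof only used $i_Z^!\mathcal F\in D^{\ge2}$, not the hypothesis on $k_X$.
\item Since $n_\mathbb R\ge n_\mathbb C+1$, $\mathcal F$ is also $S_{n_\mathbb C+1}$. Corollary~\ref{middle Serre's condition and intersection complex} therefore gives $j^\circ_*\mathcal L[n_\mathbb C]=\mathrm{IC}(X,\mathcal L)$.
\item MCM means $\mathbb D\mathcal F=\mathcal G[2n_\mathbb C]$ with $\mathcal G$ a sheaf. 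Hence $\mathcal G[n_\mathbb C]=\mathbb D(\mathcal F[n_\mathbb C])=\mathbb D\,\mathrm{IC}(X,\mathcal L)=\mathrm{IC}(X,\mathcal L^\vee)$.
\item So $\mathcal G[n_\mathbb C]$ is an IC complex. Restricting to $U$ gives $\mathcal G|_U=\mathcal L^\vee$.
\item To show $\mathcal G=j^\circ_*\mathcal L^\vee$, check that $\mathcal G$ is $S_2$. By biduality $\mathbb D\mathcal G=\mathcal F[2n_\mathbb C]$, which is concentrated in the single degree $-n_\mathbb R$, so $\mathcal G$ is itself MCM and in particular $S_2$.
\item Step 1 applied to $\mathcal G$ then gives $\mathcal G=j^\circ_*\mathcal L^\vee$, hence $j^\circ_*\mathcal L^\vee[n_\mathbb C]=\mathrm{IC}(X,\mathcal L^\vee)$.
\end{enumerate}

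\emph{($\Leftarrow$).} Assume $\mathcal F=j^\circ_*\mathcal L$ together with the two IC identities. Then
\[
\mathbb D\mathcal F=\mathbb D\bigl(\mathrm{IC}(X,\mathcal L)[-n_\mathbb C]\bigr)=\mathrm{IC}(X,\mathcal L^\vee)[n_\mathbb C]=j^\circ_*\mathcal L^\vee[2n_\mathbb C].
\]
This is a sheaf placed in degree $-n_\mathbb R$. By the reformulation of MCM recalled at the start of Section~\ref{first examples}, $\mathcal F$ is MCM.

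The main point requiring care is the duality $\mathbb D\,\mathrm{IC}(X,\mathcal L)\cong\mathrm{IC}(X,\mathbb D_U\mathcal L)$ with the correct normalization. I would cite it from \cite{Achar2021PerverseSheaves} and verify that the shifts match under the paper's convention that the shifted IC sheaf $j^\circ_*\mathcal L[n_\mathbb C]$ is the perverse one. A second point to check is that step 1 of ($\Rightarrow$) really does not need the $k_X$ being $S_2$ hypothesis of Proposition~\ref{reflexive}, since only the implication (1)$\Rightarrow$(2) is used there. A final check is that $\mathcal L^\vee$ here means the dual local system, consistent with $\mathcal Hom(-,k_U)$.
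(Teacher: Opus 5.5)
Your proof is correct and follows the paper's argument. In the forward direction you use $S_2$ to get $\mathcal F=j^\circ_*\mathcal L$ and $S_{n_\mathbb C+1}$ to get the first IC identity, then show the dual sheaf $\mathcal G$ is again MCM; the converse uses $\mathbb D\,\mathrm{IC}(X,\mathcal L)\cong\mathrm{IC}(X,\mathcal L^\vee)$ (Lemma 3.3.13 of \cite{Achar2021PerverseSheaves}). There are two small differences, both fine: you get the second IC identity from IC duality, whereas the paper reapplies Theorem~\ref{Serre's condition and perversity} to $\mathcal G$, and you correctly note that $S_2\Rightarrow\mathcal F=j^\circ_*\mathcal L$ does not need the hypothesis that $k_X$ is $S_2$.
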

\begin{proof}
If $\mathcal F$ is maximal Cohen--Macaulay, by \ref{reflexive}, we know that $\mathcal F=j_*^\circ$. By \ref{Serre's condition and perversity}, we know that $j^\circ_*\mathcal L[n_\mathbb C]=\mathrm{IC}(X,\mathcal L)$. Now suppose $\mathbb D\mathcal F=\mathcal G[\dim_\mathbb RX]$. Then $\mathbb D\mathcal G=\mathcal F[\dim_\mathbb R X]$ so $\mathcal G$ is also maximal Cohen--Macaulay. Since $\mathcal G|_U=\mathcal L^\vee$, we know from  \ref{Serre's condition and perversity} that $\quad j^\circ_*\mathcal L^\vee[n_\mathbb C]=\mathrm{IC}(X,\mathcal L^\vee)$.

Conversely, If $\mathcal F=j_*^\circ\mathcal L$ and the two conditions hold. By \cite{Achar2021PerverseSheaves} Lemma 3.3.13, we know that $\mathbb D(\mathrm{IC}(X,\mathcal L))=\mathrm{IC}(X,\mathcal L^\vee)$, so $\mathbb Dj_*^\circ\mathcal L[n_\mathbb C]=j_*^\circ\mathcal L^\vee[n_\mathbb C]$, hence $\mathbb D\mathcal F=j_*^\circ\mathcal L^\vee[n_\mathbb R]$, so $\mathcal F$ is maximal Cohen--Macaulay.
\end{proof}

Perversity can conversely imply some of Serre's conditions, as follows.
\begin{proposition}\label{Perverse implies Serre's conditions}
Suppose $c=\mathrm{codim}_\mathbb C(X-U)$, for a constructible sheaf $\mathcal F$ on $X$, we have
\begin{enumerate}
    \item [(1)] If $\mathcal F[n_\mathbb C]$ is perverse, then $\mathcal F$ is at least $S_{c}$
    \item [(2)] If $\mathcal F[n_\mathbb C]=\mathrm{IC}(X,\mathcal L)$ for some $\mathcal L$ on $U$, then $\mathcal F$ is at least $S_{\max\{3,c+1\}}$.
\end{enumerate}
\end{proposition}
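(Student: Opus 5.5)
The plan is to verify the stalkwise criterion of Proposition~\ref{Generalized duality}: $\mathcal F$ is $S_d$ if and only if $i_{X_s}^!\mathcal F\in D^{\geq \min\{d,2\,\mathrm{codim}_\mathbb C X_s\}}(X_s)$ for every stratum $X_s$. For open strata this is automatic, since $i^!=i^*$ there and $\mathcal F$ is a sheaf, so $\min\{d,0\}=0$. We therefore only need to bound $i_{X_s}^!\mathcal F$ from below for non-open strata, all of which have $\mathrm{codim}_\mathbb C X_s\geq c$, hence real codimension $\geq 2c$.

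For (1), apply Lemma~\ref{perverse lemma}(2) to the perverse object $\mathcal F[n_\mathbb C]$. It gives $i_{X_s}^!\mathcal F[n_\mathbb C]\in D^{\geq -\dim_\mathbb C X_s}$, that is, $i_{X_s}^!\mathcal F\in D^{\geq \mathrm{codim}_\mathbb C X_s}\subseteq D^{\geq c}$. Since $c\leq \mathrm{codim}_\mathbb C X_s\leq 2\,\mathrm{codim}_\mathbb C X_s$, we get $\min\{c,2\,\mathrm{codim}_\mathbb C X_s\}=c$, and the criterion for $S_c$ holds.

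For (2), the set of non-open strata is $Z=X-U$. Lemma~\ref{intermediate extension lemma} applied with $Y=U$ gives the strict bound $i_Z^!\mathcal F[n_\mathbb C]\in {}^pD^{\geq 1}(Z)$. Restricting further along a stratum $X_s\subseteq Z$ via $i^!$ preserves ${}^pD^{\geq}$, and Lemma~\ref{perverse lemma}(2) on $Z$ then yields $i_{X_s}^!\mathcal F[n_\mathbb C]\in D^{\geq 1-\dim_\mathbb C X_s}$, i.e. $i_{X_s}^!\mathcal F\in D^{\geq \mathrm{codim}_\mathbb C X_s+1}$. This bound is at least $c+1$.

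To reach $\max\{3,c+1\}$, it remains to get degree $\geq 3$ when $c=1$. This needs $i_{X_s}^!\mathcal F\in D^{\geq\min\{3,2\,\mathrm{codim}_\mathbb C X_s\}}$. For codimension $\geq 2$ the bound $\mathrm{codim}+1\geq 3$ already suffices. For codimension-one strata the required bound is $\min\{3,2\}=2$, and $\mathrm{codim}+1=2$ gives exactly that. So in every case
$$\mathrm{codim}_\mathbb C X_s+1\;\geq\;\min\{\max\{3,c+1\},\,2\,\mathrm{codim}_\mathbb C X_s\},$$
which one checks by comparing $2m$ with $m+1$ for $m=\mathrm{codim}_\mathbb C X_s\geq c\geq 1$. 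Indeed, if $m+1<\max\{3,c+1\}$, then $m=1$ and the right side is $2=m+1$. This gives $S_{\max\{3,c+1\}}$.

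The main obstacle is the careful passage from $i_Z^!$ on the closed union $Z$ to the individual strata, together with the degree bookkeeping between real and complex codimensions. In particular we must check that the strict inequality from the intermediate-extension lemma transfers stratumwise, and that open strata, where $i^!$ equals $i^*$, contribute nothing.
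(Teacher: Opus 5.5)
Your proposal is correct and follows essentially the same route as the paper: Lemma~\ref{perverse lemma} gives the stratumwise bound for (1), Lemma~\ref{intermediate extension lemma} restricted to strata gives the bound for (2), and Proposition~\ref{Generalized duality} converts both into Serre conditions. Your single check $m+1\geq\min\{\max\{3,c+1\},2m\}$ is just a merged form of the paper's separate $S_{c+1}$ and $S_3$ verifications. You also treat the open strata and the passage from $i_Z^!$ to $i_{X_s}^!$ explicitly, which the paper passes over.
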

\begin{proof}
For (1), if $\mathcal F[n]$ is perverse, then by \ref{perverse lemma}, for any strata $S$, $i_S^!\mathcal F\in D^{\geq \mathrm{codim}_\mathbb CS}\subset D^{\geq c}$. The statement then follows from \ref{Generalized duality}. For (2), if $\mathcal F[n]$ is an intersection complex, by \ref{intermediate extension lemma}, $i_S^!\mathcal F\in D^{\geq \mathrm{codim}_\mathbb CS+1}\subset D^{\geq c+1}$, so by \ref{Generalized duality} $\mathcal F$ is $S_{c+1}$. On the other hand, since $\mathrm{codim}_\mathbb CS+1\geq\min\{3,2\mathrm{codim}_\mathbb CS\}$, again by \ref{Generalized duality} $\mathcal F$ is $S_3$, so $\mathcal F$ is at least $S_{\max\{3,c+1\}}$.
\end{proof}

In particular, combining this and corollary \ref{middle Serre's condition and perverse} and \ref{middle Serre's condition and intersection complex}, we have the following result for isolated singularities.
\begin{corollary}\label{On isolated singularities}
When the stratification is $X=U\cup\{p\}$, we have
\begin{enumerate}
\item [(1)] $\mathcal F$ is $S_{n_\mathbb C}$ if and only if $\mathcal F[n_\mathbb C]$ is perverse. 
\item [(1)] $\mathcal F$ is $S_{n_\mathbb C+1}$ if and only if $\mathcal F[n_\mathbb C]=\mathrm{IC}(X,\mathcal F|_U)$.
\end{enumerate}
\end{corollary}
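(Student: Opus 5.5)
For isolated singularities, $X=U\cup\{p\}$ with $U$ the open stratum, so $c=\mathrm{codim}_\mathbb C(X-U)=n_\mathbb C$. The plan is to combine the forward implications already proved with the converse implications of Proposition~\ref{Perverse implies Serre's conditions}.

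For (1), Corollary~\ref{middle Serre's condition and perverse} gives the forward direction: if $\mathcal F$ is $S_{n_\mathbb C}$, then $\mathcal F[n_\mathbb C]$ is perverse. For the converse, Proposition~\ref{Perverse implies Serre's conditions}(1) says that perversity of $\mathcal F[n_\mathbb C]$ forces $\mathcal F$ to be at least $S_c$, and here $c=n_\mathbb C$. If one prefers to see this directly, apply Proposition~\ref{Generalized duality} to the single closed stratum $\{p\}$. Since $\mathcal F[n_\mathbb C]$ is perverse, Lemma~\ref{perverse lemma} gives $i_p^!\mathcal F[n_\mathbb C]\in D^{\geq 0}$, that is, $i_p^!\mathcal F\in D^{\geq n_\mathbb C}=D^{\geq\min\{n_\mathbb C,2n_\mathbb C\}}$. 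On the open stratum the condition in Proposition~\ref{Generalized duality}(2) is automatic, because its real codimension is $0$.

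For (2), Corollary~\ref{middle Serre's condition and intersection complex} gives the forward direction. For the converse, suppose $\mathcal F[n_\mathbb C]=\mathrm{IC}(X,\mathcal F|_U)$. Proposition~\ref{Perverse implies Serre's conditions}(2) yields $S_{\max\{3,c+1\}}$, and $\max\{3,n_\mathbb C+1\}\geq n_\mathbb C+1$. Concretely, Lemma~\ref{intermediate extension lemma} gives $i_p^!\mathcal F\in D^{\geq n_\mathbb C+1}$. We also have $\min\{n_\mathbb C+1,2n_\mathbb C\}\le n_\mathbb C+1$, so Proposition~\ref{Generalized duality} gives $S_{n_\mathbb C+1}$.

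The only point needing care is the degenerate case $n_\mathbb C=0$, and more generally the range where $2n_\mathbb C< n_\mathbb C+1$, which happens only when $n_\mathbb C\le 0$. There the Serre conditions $S_d$ with $d\le 0$ hold for every constructible sheaf, so both sides of each equivalence hold trivially. I do not expect any step to be a genuine obstacle: the argument is bookkeeping once one observes that $c=n_\mathbb C$ for an isolated singular point.
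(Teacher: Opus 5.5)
Your proposal is correct and follows the paper's own route: the paper obtains this corollary by combining Corollaries \ref{middle Serre's condition and perverse} and \ref{middle Serre's condition and intersection complex} with Proposition \ref{Perverse implies Serre's conditions} at $c=n_\mathbb C$, and your direct check at the point stratum (plus the open stratum) is just that proposition's proof specialized. One small slip in your last paragraph: for $n_\mathbb C=0$, part (2) concerns $S_1$, and it holds there not because $d\le 0$ but because $\mathbb D\mathcal F$ lives only in the excluded degree $-n_\mathbb R=0$; in any case that degenerate situation has no genuine singular point and can simply be excluded.
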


Making use of theorem \ref{MCM and intersection complex}, we have the following result for self-dual and finite-monodromy local systems on $U$.
\begin{corollary}\label{self dual and finite monodromy}
If $k=\mathbb C$, for a local system $\mathcal L$ on $U$ such that either of the following conditions hold:
\begin{enumerate}
    \item [(1)] $\mathcal L=\mathcal L^\vee$
    \item [(2)] The image of $\pi_1(U)$ is $GL(\mathcal L_x,\mathbb C)$ is a finite group.
\end{enumerate}
Then we have
$$j_*\mathcal L \ \text{is}\ S_{n_\mathbb C+1}\ \Longleftrightarrow\ j_*\mathcal L=\mathrm{IC}(X,\mathcal L)[-n_\mathbb C]\ \Longleftrightarrow\ j_*\mathcal L \ \text{is maximal Cohen--Macaulay}
$$
\end{corollary}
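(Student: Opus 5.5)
The plan is to prove the three conditions equivalent through the middle one, $j_*\mathcal L=\mathrm{IC}(X,\mathcal L)[-n_\mathbb C]$. Two of the implications follow directly from results already in the paper.

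\emph{First, MCM $\Rightarrow$ $S_{n_\mathbb C+1}$ $\Rightarrow$ intersection complex.} An MCM sheaf satisfies $S_{n_\mathbb R}=S_{2n_\mathbb C}$, and $2n_\mathbb C\geq n_\mathbb C+1$ as soon as $n_\mathbb C\geq 1$, so it is $S_{n_\mathbb C+1}$. Next, if $j_*\mathcal L$ is $S_{n_\mathbb C+1}$, then Corollary \ref{middle Serre's condition and intersection complex} gives $j_*\mathcal L[n_\mathbb C]=\mathrm{IC}(X,\mathcal L)$. Here I read $j_*\mathcal L$ as $j_*^\circ\mathcal L$; this is consistent, since an $S_2$ sheaf equals $j^\circ_*$ of its restriction by Proposition \ref{reflexive}.

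\emph{Second, intersection complex $\Rightarrow$ MCM.} By Theorem \ref{MCM and intersection complex}, it suffices to know both
$$j^\circ_*\mathcal L[n_\mathbb C]=\mathrm{IC}(X,\mathcal L)\quad\text{and}\quad j^\circ_*\mathcal L^\vee[n_\mathbb C]=\mathrm{IC}(X,\mathcal L^\vee).$$
The first is the hypothesis, so the real work is deducing the second from the first.

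In case (1), $\mathcal L\cong\mathcal L^\vee$, and applying $j^\circ_*$ and $\mathrm{IC}(X,-)$ to this isomorphism transports the first identity to the second immediately.

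In case (2), the monodromy group $G$ is finite and $k=\mathbb C$. I would argue as follows.
\begin{enumerate}
\item By Maschke, $\mathcal L$ is semisimple, so it decomposes as $\bigoplus \mathcal L_\alpha$ into irreducible local systems.
\item Both $j^\circ_*$ and $\mathrm{IC}$ are additive, so the hypothesis splits into the identity $j^\circ_*\mathcal L_\alpha[n_\mathbb C]=\mathrm{IC}(X,\mathcal L_\alpha)$ for each summand. This uses Lemma \ref{Serre's conditions in extensions}(3), or directly the fact that a direct summand of an IC complex is the IC of the corresponding summand.
\item For a finite group, $\mathcal L^\vee\cong\overline{\mathcal L}$, the complex conjugate local system, because a $G$-invariant Hermitian form identifies $V^\vee$ with $\overline V$.
\item Complex conjugation of coefficients is an exact auto-equivalence of $D^b_c(X,\mathbb C)$. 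It commutes with $j_*$, with truncations, and hence with intermediate extension. Conjugating the hypothesis therefore gives $j^\circ_*\overline{\mathcal L}[n_\mathbb C]=\mathrm{IC}(X,\overline{\mathcal L})$, which is the required identity for $\mathcal L^\vee$.
\end{enumerate}

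The main obstacle I expect is step 4: checking carefully that conjugating coefficients is a genuine, $\mathbb C$-antilinear, equivalence compatible with the six functors. One way to see it is to view constructible $\mathbb C$-sheaves as $\mathbb R$-sheaves with a complex structure and twist that structure. A cleaner alternative, if it goes through, avoids conjugation altogether. Since $G$ is finite, $\mathcal L$ is defined over a number field $K$, and a Galois automorphism sending $V$ to $V^\vee$ acts on $D^b_c(X,K)$. One then checks that the condition $j^\circ_*\mathcal L[n_\mathbb C]=\mathrm{IC}(X,\mathcal L)$ is invariant under that automorphism.
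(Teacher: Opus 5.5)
Your proposal is correct and follows the paper's own route. You close the loop with Corollary~\ref{middle Serre's condition and intersection complex} and Theorem~\ref{MCM and intersection complex}, which reduces everything to transferring $j^\circ_*\mathcal L[n_\mathbb C]=\mathrm{IC}(X,\mathcal L)$ to $\mathcal L^\vee$: this is immediate in case (1), and in case (2) it uses $\mathcal L^\vee\cong\overline{\mathcal L}$ from an averaged Hermitian form. Your Maschke decomposition in steps 1--2 is unnecessary. Step 4 (conjugating coefficients commutes with $j^\circ_*$, $i^!$ and truncations) and your observation that MCM implies $S_{n_\mathbb C+1}$ spell out points the paper leaves tacit.
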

\begin{proof}
If (1) holds, the conclusion follows from corollary \ref{middle Serre's condition and intersection complex} and theorem \ref{MCM and intersection complex}. If (2) holds, let $G$ to be the image of $\pi_1(U)$ and start with any positive-definite Hermitian form \(h_0\) on \(V\). Do average:
\[
h(v,w)=\frac1{|G|}\sum_{g\in G}h_0(gv,gw).
\]
we get a $G$-invariant Hermitian form on $V$. Then $\mathcal L^\vee=\overline{\mathcal L}$, the complex conjugation of $\mathcal L$. Then the conclusion again follows from corollary \ref{middle Serre's condition and intersection complex} and theorem \ref{MCM and intersection complex}.
\end{proof}
The following example shows that not all constructible sheaves which are shifted intersection complex are maximal Cohen-Macaulay.
\begin{example}\label{intersection complex but not MCM}
Suppose $X=\mathbb C^2$ with stratification $X=\{0\}\cup A_x\cup A_y\cup U$ where $A_x=\{(x,0)|x\neq 0\}$ and $A_x=\{(0,y)|y\neq 0\}$. Now $\pi_1(U)\cong\mathbb Z_x\oplus \mathbb Z_y$. Consider a $3$-dimensional vector space $V=\mathbb C\langle u,v,w\rangle$ and two commuting linear operations
$$F_x((u,v,w))=(u,v+u,w),\quad F_y((u,v,w))=(u,v,w+u)
$$
This endows $V$ a $\pi_1(U)$-representation structure, hence gives a local system $\mathcal L$ on $U$. Let $\mathcal F=j_* \mathcal L$. By the computation in \ref{punctured disck}, we see that $\mathcal F$ is maximal Cohen-Macaulay along the axes $A_x$ and $A_y$. Let $U_x=\mathbb C^2-A_x$, $U_y=\mathbb C^2-A_y$ and $U_0=U_x\cup U_y$. At $0$, the Mayer-Vietoris sequence gives
$$
...\longrightarrow H^1(U_0,\mathcal F)
\longrightarrow H^1(U_x,\mathcal F)\oplus H^1(U_y,\mathcal F)
\xlongrightarrow{\delta} H^1(U,\mathcal L),
\longrightarrow H^2(U_0,\mathcal F)
\longrightarrow 0.
$$
Now, using Mayer-Vietoris sequence tricks again, we compute that $H^1(U_x,\mathcal F)=\mathrm{coker}_{\ker (F_y-id)}(F_x-id)=\mathbb C\langle v\rangle$, $H^1(U_y,\mathcal F)=\mathrm{coker}_{\ker (F_x-id)}(F_y-id)=\mathbb C\langle w\rangle$ and $H^1(U,\mathcal L)=\mathbb C\langle \alpha,\beta,\gamma\rangle$. One can also figure out that the map $\delta$ is $\delta(av,bu)=a\alpha-b\beta$. So, $H^1(U_0,\mathcal F)=0$, $H^2(U_0,\mathcal F)=\mathbb C$. By the localizing triangle $i_0^!\mathcal F\rightarrow i_0^*\mathcal F\rightarrow i_0^*j_*\mathcal L$, we see that $H^3(i_0^!\mathcal F)=H^2(U_0,\mathcal F)\neq 0$. This means that $\mathcal F$ is not maximal Cohen-Macaulay. However, $H^2(i_0^!\mathcal F)=H^1(U_0,\mathcal F)=0$ and $H^{0,1}(i_0^!\mathcal F)$ since $\mathcal F$ is $S_2$, we see that $\mathcal F$ is $S_3$, which implies by \ref{middle Serre's condition and intersection complex} that $\mathcal F[2]=\mathrm{IC(\mathbb C^2,\mathcal L)}$. 
\end{example}

\subsection{The geometric Auslander--Reiten Conjecture}\label{Auslander--Reiten Conjecture}
With a Verdier-type category and our Serre's condition construction, one can ask many questions that arise in commutative algebra and classical algebraic geometry. Let's give one example in this subsection. The Auslander--Reiten conjecture is a long-standing open problem in commutative algebra, it suggests for any noetherian local ring $R$ and an $R$-module $M$, if $\mathrm{Ext}^{>0}(M,M\oplus R)=0$, then $M$ is projective. In \cite{KimuraARNormal}, the Auslander--Reiten conjecture for normal rings has been proved. Since by remark \ref{algebraic Serre's conditions}, our geometric setting on complex varieties is an analogue of algebraic normal schemes with a dualizing complex, the Auslander--Reiten conjecture should be true. Here we give a proof of it, and see how it feeds back to the algebraic Auslander--Reiten conjecture.
\begin{theorem}[Geometric Auslander--Reiten Conjecture]\label{ARC}
Suppose $X$ is a equidimensional complex variety with Whitney stratification. Fix a field $k$ of characteristic $0$ and a constructible sheaf $\mathcal F$ on $X$ over $k$, if $\mathcal F$ is geometrically maximal Cohen--Macaulay, and $\mathcal Ext^{>0}(\mathcal F,\mathcal F)=0$, then $\mathcal F$ is a local system. 
\end{theorem}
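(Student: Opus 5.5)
We argue by contradiction: we locate a stratum where $\mathcal F$ fails to be locally constant and compute $\mathcal Ext^{>0}(\mathcal F,\mathcal F)$ at a point of it. By Proposition~\ref{reflexive} (maximal Cohen--Macaulay implies $S_2$, and $j_*^\circ k_U$ is $S_2$ by Remark~\ref{analogue of normal}), we have $\mathcal F=j_*^\circ\mathcal L$ with $\mathcal L=\mathcal F|_U$. Also $\mathcal F$ is a local system if and only if, for every stratum $X_s$ and $x\in X_s$, the restriction $\mathcal F_x\to \Gamma(B_x\cap U,\mathcal L)$ is an isomorphism onto the full fiber $\mathcal L_y$. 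Equivalently, the local monodromy of $\mathcal L$ around $X_s$ is trivial.

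We proceed by induction on the strata, ordered by increasing codimension. Suppose $\mathcal F$ is already a local system on the open union $V$ of strata of codimension $<c$. Replace $U$ by $V$ and $\mathcal L$ by $\mathcal F|_V$. Take a stratum $X_s$ of codimension $c$ and a point $x\in X_s$. By Whitney local triviality, near $x$ we have $X\simeq \mathbb C^{\dim X_s}\times C(L)$, where $L$ is the link. The sheaf $\mathcal L$ restricted to $L\cap V$ corresponds to a representation $\rho$ of $\pi=\pi_1(L\cap V)$ on $W=\mathcal L_y$. Since $\mathcal F=j_*^\circ\mathcal L$ and restriction to $V$ is fully faithful on $j_*$-extensions, we get
$$\mathcal Ext^i(\mathcal F,\mathcal F)_x=\mathrm{Ext}^i(\mathcal F|_{B},\mathcal F|_B).$$
Adjunction $(j^*,j_*)$ gives $R\mathrm{Hom}(\mathcal F,Rj_*\mathcal L)=R\Gamma(B\cap V,\mathcal End(\mathcal L))$. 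The cone of $\mathcal F\to Rj_*\mathcal L$ is $\tau_{\ge1}Rj_*\mathcal L$, supported on $X_s$. Its contribution is controlled by $i^!\mathcal F$, which lies in degrees $\ge\min\{n,2c\}$ by Proposition~\ref{Generalized duality}, together with the MCM condition. In low degrees we therefore obtain $\mathcal Ext^1(\mathcal F,\mathcal F)_x\supseteq H^1(L\cap V,\mathrm{End}\,\rho)$, or at least a map to it with controlled cokernel. Hence the key claim: the vanishing hypothesis forces $H^{i}(\pi,\mathrm{End}(\rho))$, or more precisely $H^i(L\cap V,\mathcal End\mathcal L)$, to vanish in the range $0<i<2c-1$.

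Now use $\mathrm{char}\,k=0$. The identity splits off $\mathrm{End}(\rho)\supseteq k$, because the trace provides a splitting in characteristic $0$. Therefore $H^i(L\cap V,k)$ is a direct summand of $H^i(L\cap V,\mathcal End\mathcal L)$, and it must vanish in the range above. On the other hand, the MCM condition applied to $\mathcal End$ should also be used. Verdier duality on the link, which is a real $(2c-1)$-dimensional pseudomanifold whose complement of lower strata is the smooth part $L\cap V$, pairs $H^i$ with $H_c^{2c-1-i}$. The top class $H^{2c-1}_c(L\cap V,k)=k$ then dualizes to $H^0=k$. Combined with $\mathcal Ext^{2c-1}(\mathcal F,\mathcal F)_x=0$ (note $2c-1>0$ when $c\ge1$), this means the fundamental-class contribution of the trace summand must vanish. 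That is impossible unless the cone term cancels it, and the cone term vanishes exactly when $\mathcal F_x\to \mathcal L_y$ is an isomorphism, i.e. $\rho$ is trivial. In the simplest case $c=1$, $L\cap V\simeq S^1$, and $\mathrm{Ext}^1$ of $j_*^\circ\mathcal L_V$ on a disc contains $H^1(S^1,\mathrm{End}V)\supseteq k$ unless $j_*=j^\circ_*$ absorbs it. This is consistent with Example~\ref{punctured disck}. So a nontrivial $\rho$ contradicts the hypothesis, and the induction step closes.

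The main obstacle is the middle step: precisely identifying $\mathcal Ext^i(j_*^\circ\mathcal L,j_*^\circ\mathcal L)_x$ with cohomology of the link, modulo the truncation cone. We also need to show that the trace summand survives to produce a nonzero positive-degree Ext class whenever $\rho$ is nontrivial. We would first try to compute $R\mathcal Hom(\mathcal F,\mathcal F)=R\mathcal Hom(\mathcal F,Rj_*\mathcal L)\to R\mathcal Hom(\mathcal F,\tau_{\ge1}Rj_*\mathcal L)$, using that $\mathbb D\mathcal F=\mathcal G[n_\mathbb R]$ to rewrite the second term as $i_*(\ldots)$ via $R\mathcal Hom(\mathcal F,-)=\mathbb D(\mathcal F\otimes\mathbb D(-))$. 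If that fails, we would fall back on the isolated-singularity reduction, taking a normal slice and a generic point of $X_s$, where $\mathcal F$ is $j_*^\circ$ of a representation of $\pi_1$ of the link.
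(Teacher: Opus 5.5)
The set-up is fine (induction on strata, the triangle $\mathcal F\to Rj_*\mathcal L\to\tau_{\ge1}Rj_*\mathcal L$), but the decisive step is wrong and in fact runs backwards. Write $L$ for the link (your $L\cap V$). MCM forces $i_{X_s}^!\mathcal F$ to be concentrated exactly in degree $2c$, so $H^m(L,\mathcal L)=0$ for $0<m<2c-1$. The cone therefore contributes only $\mathrm{Hom}(\mathcal F_x,H^{2c-1}(L,\mathcal L))$, in degree $2c-1$. Hence $\mathcal Ext^m(\mathcal F,\mathcal F)_x\cong H^m(L,\mathcal End\mathcal L)$ for $0<m<2c-1$, and
\[
0\to\mathcal Ext^{2c-1}(\mathcal F,\mathcal F)_x\to H^{2c-1}(L,\mathcal End\mathcal L)\xrightarrow{\ \beta\ }\mathrm{Hom}\bigl(\Gamma(L,\mathcal L),H^{2c-1}(L,\mathcal L)\bigr)\to\mathcal Ext^{2c}(\mathcal F,\mathcal F)_x\to 0,
\]
with $\beta(\xi)(s)=\xi\cdot s$.

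The cone term does not vanish when $\rho$ is trivial. It then equals $\mathrm{End}(V)\otimes H^{2c-1}(L,k)$, and it is exactly what kills the trace class. For $\mathcal F=k_{\mathbb C}$, the map $\beta\colon H^1(S^1,k)\to\mathrm{Hom}(k,H^1(S^1,k))$ is an isomorphism; read literally, your argument would give $\mathcal Ext^1(k_{\mathbb C},k_{\mathbb C})\neq0$.

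Conversely, the trace class can die for nontrivial $\rho$. Take $X=\mathbb C$ and $\rho=k\oplus\chi$ with $\chi$ a nontrivial character, so $\mathcal F=k_{\mathbb C}\oplus j_!\mathcal L_\chi$. Then $\beta([\mathrm{id}])$ is the nonzero natural map from invariants to coinvariants. The only class in $\mathcal Ext^1(\mathcal F,\mathcal F)_0\cong k$ comes from $\mathrm{id}_\chi$, not from the trace. So the claim you flag as the main obstacle (the trace summand survives whenever $\rho$ is nontrivial) is false. The middle range gives nothing either, since $H^m(L,k)$ typically vanishes there (e.g.\ $L=S^{2c-1}$).

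What is actually needed is that $\beta$ being an isomorphism forces $\rho$ to be trivial, and this uses all of $\mathrm{End}(\rho)$, not its trace part. Suppose Poincar\'e duality holds on $L$; you would have to extract from the MCM hypothesis that $L$ is an orientable $k$-homology manifold, e.g.\ via Proposition~\ref{Cohen--Macaulay and Gorenstein}. Then $\beta$ is the transpose of the evaluation map $\Gamma(L,\mathcal L)\otimes\Gamma(L,\mathcal L^\vee)\to\Gamma(L,\mathcal End\mathcal L)$. Surjectivity gives $\mathrm{id}=\sum s_i\otimes f_i$ with invariant $s_i$, so every vector is invariant.

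This is the local shadow of the paper's proof, which never touches links. The paper uses $R\mathcal Hom(\mathcal F,\mathcal F)=\mathbb D(\mathcal F\otimes\mathbb D\mathcal F)$ to make $\mathcal F\otimes\mathcal G$ and $\mathcal End(\mathcal F)$ MCM, hence reflexive. It gets $k_X$ MCM from the trace splitting in characteristic $0$, so $X$ is geometrically Gorenstein and $\mathcal G=\mathcal F^\vee$. It then writes $\mathrm{id}_{\mathcal F}$ locally in the image of the evaluation isomorphism $\mathcal F\otimes\mathcal F^\vee\to\mathcal End(\mathcal F)$, so $\mathcal F$ is locally a summand of $k^N$.

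Separately, ``locally constant $\Leftrightarrow$ trivial local monodromy'' fails for disconnected links. Take $X=P_1\cup P_2$, two planes in $\mathbb C^4$ meeting at a point. For $j_*^\circ k_U$ the local monodromy is trivial, but $\mathcal F$ is not locally constant; there $\mathcal Ext^4\neq0$ because the evaluation map is not injective. If $\mathcal F|_U$ vanishes on a local branch, the conclusion fails outright: $(i_{P_1})_*k_{P_1}$ is MCM with $\mathcal Ext^{>0}=0$ but is not a local system. So a full-support hypothesis is needed, which the paper's ``generically injective'' step uses tacitly.
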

\begin{proof}
Let $n=\dim_\mathbb RX$. Since $\mathcal F$ is maximal Cohen--Macaulay, we can suppose $\mathbb D\mathcal F=\mathcal G[n]$ for some constructible sheaf $\mathcal G$. Now it follows that $\mathbb D\mathcal G=\mathcal F[n]$, so $\mathcal G$ is also maximal Cohen--Macaulay. Now $\mathcal Ext^{>0}(\mathcal F,\mathcal F)=0$, so $R\mathcal Hom(\mathcal F,\mathcal F)=\mathcal End(\mathcal F)$. On the other side:
$$R\mathcal Hom(\mathcal F,\mathcal F)=\mathbb D(\mathcal F\otimes_{k_X}^L \mathbb D \mathcal F)=\mathbb D(\mathcal F\otimes\mathcal G[n])=\mathbb D(\mathcal F\otimes_{k_X} G)[-n]
$$
so $\mathbb D(\mathcal F\otimes_{k_X} \mathcal G)=\mathcal End(\mathcal F)[n]$. Then we see that both $\mathcal F\otimes_{k_X} \mathcal G$ and $\mathcal End(\mathcal F)$ are maximal Cohen--Macaulay. 

Now suppose $j:U\hookrightarrow X$ is the union of open strata and let $\mathcal F|_U=\mathcal L$ where $\mathcal L$ is a local system on $U$. Let $A=j_*^\circ k_U$. By \ref{reflexive}, $A$ is reflexive. Now $\mathcal F$, $\mathcal G$ and $\mathcal End(\mathcal F)$ are endowed a $A$-module structure induced by the pushforward of action on $U$. one checks by pushing forward the corresponding relations on $U$ that the action is tensor balanced, i.e., for any $x\in X$, $a\in A_x$, $m\in\mathcal F_x$ and $n\in\mathcal G_x$, one has $am\otimes n=m\otimes an$. Now take $f\in \mathcal G_x'$ such that $f(n)=1$, evaluate both sides by $id_{\mathcal F_x}\otimes f$, one sees that $am=f(an)m$, so $a$ acts on $\mathcal F_x$ as a scalar $\lambda$. Similarly, $a$ acts on $\mathcal G_x$ by scalar $\lambda$.

Since $\mathcal F\otimes_{k_X} \mathcal G$ and $\mathcal End(\mathcal F)$ are maximal Cohen--Macaulay, again by \ref{reflexive}, $\mathcal F\otimes_{k_X} \mathcal G=j_*^\circ(\mathcal L\otimes_{k_X}\mathcal L^\vee)$, $\mathcal End(\mathcal F)=j_*^\circ\mathcal End(\mathcal L)$, and the evaluation isomorphism $\mathcal L\otimes \mathcal L^\vee\xrightarrow{\sim}\mathcal End(\mathcal L)$ pushforwards to an isomorphism
$$\theta:\mathcal F\otimes_{k_X}\mathcal G\xrightarrow{\sim}\mathcal End(\mathcal F)
$$
This isomorphism is $A$-balanced, i.e., $a\theta(m\otimes n)=\theta(am\otimes n)=\theta(m\otimes an)$. Those are also relations pushed forward from $U$. Now by discussion above, for any $x\in X$, $a$ acts on $\mathcal End(\mathcal F)_x$ as a scalar $\lambda$. On the other hand, the map multiplication by scalar $k_U\rightarrow\mathcal End(\mathcal L)$ pushforwards to $A\rightarrow\mathcal End( \mathcal F):a\rightarrow a\cdot id_\mathcal F$ which is generically injective. Now $A$ is $S_2$, a fortiori, it is torsion free, which shows that this map is injective. However, our discussion above shows that $(a-\lambda.1)\cdot id_\mathcal F=0$, so $a=\lambda.1$. Since $a$ is any arbitrary element in any stalk, this shows that $A=k_X$.

Now the splitting $k_U\xrightarrow{a\mapsto a\cdot id} \mathcal End(\mathcal L)\xrightarrow{tr/rank} k_U$ pushforwards to to $k_X\rightarrow\mathcal End(\mathcal F)\rightarrow k_X$, so $k_X$ is a summand of a maximal Cohen--Macaulay sheaf, hence also maximal Cohen--Macaulay by \ref{Serre's conditions in extensions}. Then $X$ is geometrically Cohen--Macaulay. Since $X$ is a complex space, Proposition \ref{Cohen--Macaulay and Gorenstein} tells us that $X$ is indeed geometrically Gorenstein, so $\mathbb Dk_X=k_X[n]$. Then we see that $\mathcal G=\mathcal F^\vee$, the map $\theta$ is the usual evaluation map $\mathcal F\otimes_{k_X}\mathcal F^\vee\rightarrow\mathcal End(\mathcal F)$. It is an isomorphism by the discussion above, a fortiori, it is surjective. Now consider the section $id_\mathcal F$, by the definition of a surjective sheaf map, for any $x\in X$, there is a neighborhood $V$ such that $id_F$ can be represented as $\theta(\sum_{i=1}^Nm_i\otimes\phi_i)$. Now consider the splitting
$$\mathcal F|_V\xrightarrow{(\phi_1,\phi_2,...\phi_N)} k_V^N\xrightarrow{(m_1,m_2,...,m_N)}\mathcal F|_V
$$
we see that the $\mathcal F|_V$ is a direct summand of $k_V^N$. Take a connected neighborhood $V'$ in $V$, we see that $\mathcal F|_{V'}$ is constant. Hence, $\mathcal F$ is a local system.
\end{proof}

Let's see how this method feeds back to the classical Auslander--Reiten conjecture. Let's consider the Gorenstein case: Suppose $R$ is a Gorenstein ring, either global or local, and $M$ is an $R$-module with $\mathrm{Ext}^{>0}_R(M,M\oplus R)=0$. Let $n=\dim R$, we translate this condition into
$$\mathbb DM=M^\vee[n],\quad  R\mathrm{Hom}_R(M,M)=\mathrm{End}_R(M)
$$
Then we immediately know that $M^\vee$ is also maximal Cohen--Macaulay since one can apply dual again. Now we also have the formula in derived category:
$$R\mathrm{Hom}_R(M,M)=\mathbb D(M\otimes^L\mathbb DM)=\mathbb D(M\otimes^L M^\vee)[-n]
$$
So $M\otimes^L M^\vee=\mathbb D(R\mathrm{Hom}_R(M,M))[-n]=\mathbb D(\mathrm{End}_R(M))[-n]$. It seems that the derived tensor product will cause trouble, but in fact not, since
$$M\otimes^L M^\vee\in D^{\leq 0}(\mathfrak Mod(R)),\quad \mathbb D(\mathrm{End}_R(M))[-n]=R\mathrm{Hom}_R( \mathrm{End}_R(M), R)\in D^{\geq0}(\mathfrak Mod(R))
$$
so actually $M\otimes^L M^\vee=\mathbb D(\mathrm{End}_R(M))[-n]\in\mathfrak Mod(R)$, which tells us $M\otimes ^LM^\vee=M\otimes M^\vee$ and $\mathbb D(\mathrm{End}_R(M))=M\otimes M^\vee[n]$. Hence  both $M\otimes M^\vee$ and $\mathrm{End}_R(M)$ are maximal Cohen--Macaulay.

Now if the Auslander--Reiten conjecture holds for dimension $1$ Gorenstein rings, then $M\otimes M^\vee$ and $\mathrm{End}_R(M)$ are locally free in codimension $1$, so their singular locus on $\mathrm{Spec} R$ are both of codimension $\geq 2$, and there is a open set $U$ such that both $M\otimes M^\vee$ and $\mathrm{End}_R(M)$ are locally free on $U$ and $\mathrm{Spec} R-U$ is of codimension $\geq 2$. Then one can again play the game, the natural isomorphism $M_U\otimes M_U^\vee\xrightarrow{\sim}\mathrm{End}_R(M_U)$ pushes forward to an isomorphism
$$\theta: M\otimes M^\vee\xrightarrow{\sim}\mathrm{End}_R(M)
$$
so identity is in the pre-image, namely $\theta(\sum_{i=1}^Nm_i\otimes\phi_i)=id_M$. Then
$$M\xrightarrow{(\phi_1,\phi_2,...\phi_N)} R^N\xrightarrow{(m_1,m_2,...,m_N)}\mathcal M
$$
splits and hence $M$ is projective. Thus we conclude that
\begin{proposition}
The Auslander--Reiten conjecture for Gorenstein rings holds if and only if it holds for all $1$-dimensional Gorenstein rings.
\end{proposition}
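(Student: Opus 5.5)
The "only if" direction is trivial: if the Auslander--Reiten conjecture holds for all Gorenstein rings, it holds in particular for those of dimension $1$. For the converse I will follow the argument sketched just before the statement. I will also check it carefully, since the sketch silently localizes and uses a pushforward along $U$.

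\emph{Setup.} Let $R$ be Gorenstein and $M$ finitely generated with $\mathrm{Ext}^{>0}_R(M,M\oplus R)=0$. Being projective is local, and the hypothesis localizes because Ext of finitely generated modules commutes with localization. So I may assume $R$ is local of dimension $n$, with $\mathbb D=R\mathrm{Hom}_R(-,R)[n]$. The vanishing $\mathrm{Ext}^{>0}(M,R)=0$ gives $\mathbb DM=M^\vee[n]$, so $M$ and $M^\vee$ are MCM and $M\cong M^{\vee\vee}$. The vanishing $\mathrm{Ext}^{>0}(M,M)=0$ gives $R\mathrm{Hom}(M,M)=\mathrm{End}_R(M)$.

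\emph{Tensor--hom identity.} The degree comparison already displayed shows $M\otimes^L M^\vee=M\otimes M^\vee\cong \mathbb D(\mathrm{End}_R(M))[-n]$. Hence both $E:=\mathrm{End}_R(M)$ and $T:=M\otimes M^\vee$ are MCM. Here I must justify the identity $R\mathrm{Hom}(M,M)\cong\mathbb D(M\otimes^L\mathbb DM)$. It follows from tensor--hom adjunction together with $\mathbb D\mathbb D\simeq\mathrm{id}$ on $D^b_{fg}(R)$, which holds because $R$ is Gorenstein of finite injective dimension.

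\emph{Reduction to height one.} Let $\mathfrak p$ be a prime of height $\le1$. Then $R_{\mathfrak p}$ is Gorenstein of dimension $\le 1$, and the hypotheses localize. The dimension-$1$ case (or, in dimension $0$, the hypothesis applied to the Artinian Gorenstein ring, which is also covered by assumption, or else handled by the known Artinian Gorenstein case) gives that $M_{\mathfrak p}$ is free. Therefore $M$, $M^\vee$, $T$ and $E$ are free on the open set $U=\mathrm{Spec}R\setminus Z$, where $Z=\{\mathfrak p:M_{\mathfrak p}\text{ not free}\}$ is closed of codimension $\ge2$. On $U$ the evaluation map $\theta:T\to E$, $m\otimes\phi\mapsto(x\mapsto \phi(x)m)$, is an isomorphism.

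\emph{Extending across codimension $2$.} This is the key step. MCM modules over a local ring of dimension $\geq 2$ are $S_2$, and $S_2$ modules $N$ satisfy $N\cong \Gamma(U,\widetilde N)$ whenever $\mathrm{codim}Z\ge2$, since $H^0_Z(N)=H^1_Z(N)=0$ because $\mathrm{depth}\,N_{\mathfrak p}\ge2$ on $Z$. Since $\theta$ is a global map that is an isomorphism on $U$, it is an isomorphism on sections over $U$, and hence $\theta$ is an isomorphism. If $n\le1$ there is nothing to prove.

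\emph{Conclusion.} Surjectivity of $\theta$ yields $\sum m_i\otimes\phi_i\mapsto \mathrm{id}_M$. Then $M\xrightarrow{(\phi_i)}R^N\xrightarrow{(m_i)}M$ composes to the identity, so $M$ is a direct summand of a free module, hence projective.

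The main obstacle is the dimension-$0$ base case and ensuring that the hypotheses genuinely localize. I would interpret the statement's "$1$-dimensional" as "dimension $\le1$", or note that the Artinian Gorenstein case reduces to dimension $1$ via $R[t]_{(\mathfrak m,t)}$ and $M[t]$. Under this adjoining of a variable, Gorensteinness, the Ext vanishing, and freeness all transfer by flatness.
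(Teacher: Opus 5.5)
Your proposal is correct and follows the paper's own argument. The shared steps are: the degree comparison showing $M\otimes M^\vee$ and $\mathrm{End}_R(M)$ are maximal Cohen--Macaulay, freeness of $M$ in codimension $\le 1$ from the low-dimensional case, extension of the evaluation isomorphism across a closed set of codimension $\ge 2$, and the splitting $M\to R^N\to M$ obtained from a preimage of $\mathrm{id}_M$. You also make explicit several points the paper leaves implicit: the reduction to the local case, the $S_2$/local-cohomology justification of the extension step, and the dimension-$0$ base case via $R[t]_{(\mathfrak m,t)}$.
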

In fact, by Proposition 2.2 of \cite{Kumashiro2023}, one can reduce the Gorenstein Auslander--Reiten conjecture to $0$-dimensional case. One interesting formulation of this conjecture is by using singularity category. The singularity category is defined by 
$$\mathrm{Sing}(R)=D^b(\mathfrak Mod(R))/\mathrm{Perf} =\underline{ \mathrm{MCM}}(R)
$$
where $\underline{ \mathrm{MCM}}(R)$ is the stable category of maximal Cohen--Macaulay $R$-modules. The equality on the right is by \cite{Buchweitz1986}. This is a triangulated category. The suspension functor $\Omega$ is defined by take the first syzygy of the free resolution $0\rightarrow\Omega M\rightarrow R^n\rightarrow M\rightarrow 0$. In singular category, $[R^n]=0$, so $\Omega M=M[1]$. Now the Auslander--Reiten conjecture can be translated in the following form
$$\langle M,\Omega^i M\rangle=0 \ \text{for} \ i\neq 0\  \Rightarrow\ \langle M,M\rangle=0
$$
One can also ask this question for other categories, while some answers could be negative. For example, one can ask this in the homotopy category of spectra, the counter example is $H\mathbb Q$ since $\mathrm{map}(H\mathbb Q,H\mathbb Q)=H\mathbb Q$ and therefore $\langle H\mathbb Q,H\mathbb Q\rangle=\mathbb Q$ and $\langle H\mathbb Q,H\mathbb Q[i]\rangle=0$ if $i\neq 0$. However, in our reduction, we can only consider the Artinian Gorenstein local rings, all modules admits a finite filtration by $\kappa(R)$, the residue field of $R$, so the singularity category contains more finiteness and structure. Understanding those structure could be a potential way to prove the conjecture.

\bibliographystyle{amsalpha}
\bibliography{references}
\end{document}